\setlist[enumerate]{topsep=2pt,itemsep=-0.5ex,partopsep=0ex,parsep=1ex}
\theoremstyle{plain}
\newtheorem{theorem}{Theorem}[section]
\newtheorem{lemma}[theorem]{Lemma}
\newtheorem*{question}{Question}
\theoremstyle{definition}
\newtheorem{definition}[theorem]{Definition}
\theoremstyle{remark}
\newtheorem{remark}[theorem]{Remark}
\newtheorem{remarks}[theorem]{Remarks}
\newtheorem{example}[theorem]{Example}
\newtheorem*{claim}{Claim}
\title{Pre-Lagrangian tori transverse to an Anosov flow}
\author{Francesco Ruscelli}
\address{Mathematical Institute, University of Heidelberg, 69120 Heidelberg, Germany}
\email{fruscelli@mathi.uni-heidelberg.de}
\begin{document}

\begin{abstract}
    An Anosov flow on a smooth three-manifold \( M \) gives rise to a Liouville structure on \( \mathbb{R} \times M \) by a construction of Mitsumatsu. In a recent paper, Cieliebak, Lazarev, Massoni and Moreno ask whether an embedded torus \( \Sigma \subseteq M \) transverse to an Anosov flow gives rise to a Lagrangian in \( \mathbb{R} \times M \). We show that the answer to this question is in general negative by finding a topological obstruction related to the foliations induced on the torus by the weak stable and unstable bundles of the flow. Going in the opposite direction, we show that the answer is positive if the induced foliations do not admit parallel compact leaves.
\end{abstract}

\maketitle

\section{Introduction}
\label{sec:introduction}
Given a smooth Anosov flow \( \varphi_t \colon M \to M \) on a closed, oriented three-manifold \( M \), Mitsumatsu \cite{mitsumatsu1995} showed that there exists a pair \( \xi_{\pm} \) of oppositely oriented contact structures on \( M \) intersecting transversely along the direction of the flow (see also the work of Eliashberg and Thurston \cite{eliashberg1998}). The \textit{bicontact structure} \( (\xi_+, \xi_-) \) can be described by a \textit{Liouville pair}, i.e.\ a pair of contact forms \( \alpha_{\pm} \in \Omega^1(M) \) for \( \xi_{\pm} \) such that \( \lambda = e^s \alpha_+ + e^{-s} \alpha_- \) is a Liouville form on \( \mathbb{R}_s \times M \), that is \( d \lambda \) is symplectic. This Liouville structure only depends, up to exact symplectomorphism, on the homotopy class of the Anosov flow \cite[Corollary 1]{massoni2023}. The map
\begin{equation*}
    \{ \text{Anosov flows} \} \to \{ \text{Bicontact structures} \}
\end{equation*}
was upgraded to a full correspondence by Hozoori \cite{hozoori2022} and \cite{massoni2023}. They showed that Anosov flows (up to positive time reparameterization) correspond to special pairs of contact forms, which we call \textit{Anosov-Liouville pairs}. The precise definition will be given in \cref{sec:preliminaries}.

Given this picture, it is natural to wonder whether one can leverage tools from symplectic topology to extract information about Anosov flows, which is a direction first explored recently by Cieliebak, Lazarev, Massoni and Moreno \cite{cieliebak2022}. Amongst many things, the authors investigate the existence of closed Lagrangians in \( \mathbb{R} \times M \) and they study two particular examples, namely geodesic flows on hyperbolic surfaces and suspensions of hyperbolic toral automorphisms. They also pose the following question.
\begin{question}[{\cite[Question 7.4]{cieliebak2022}}]
    Does a torus transverse to an Anosov flow give rise to a Lagrangian?
\end{question}
The question, as formulated by the authors, is somewhat vague as it does not specify how we are supposed to embed a torus \( \Sigma \subseteq M \) in \( \mathbb{R} \times M \). We will make things more precise soon.

The motivation behind this question is threefold. Firstly, we do have an example of such tori, namely the \( \mathbb{T}^2 \)-fibers of suspension flows. Secondly, there is an abundance of Anosov flows admitting transverse tori. A special subclass which is of particular interest is that of \textit{nontransitive Anosov flows}, i.e.\ flows that do not admit any dense forward orbit. These admit a collection of transverse tori by a classical result of Brunella \cite{brunella1993}. While their dynamical relevance is well-known, it is not clear whether they carry any symplectic information. Lastly, Brunella also observed that a torus transverse to an Anosov flow is incompressible. Thus, if a transverse torus \( \Sigma \cong \{ 0 \} \times \Sigma \subseteq (\mathbb{R} \times M, d \lambda) \) is Lagrangian, it is automatically weakly exact (recall that a Lagrangian submanifold \( L \subseteq (V, \omega) \) is called \textit{weakly exact} if \( \omega|_{\pi_2(V, L)} = 0 \)). Any such torus defines an object in the non-exact Fukaya category of \( \mathbb{R} \times M \), which is a symplectic invariant of the flow introduced in \cite{cieliebak2022}.

\smallskip

The most natural way to embed a torus \( \Sigma \subseteq M \) in \( \mathbb{R} \times M \) is as the graph of a function. Thus, for the sake of this paper, we give the following nonstandard definition.
\begin{definition}\label{def:pre-lagrangian}
    Let \( \varphi_t \colon M \to M \) be a smooth Anosov flow.
    An embedded surface \( \Sigma \subseteq M \) is \textit{pre-Lagrangian} if there exist an Anosov-Liouville pair \( (\alpha_+, \alpha_-) \in \Omega^1(M) \times \Omega^1(M) \) supporting \( \varphi_t \) and a function \( f \in C^{\infty}(\Sigma) \) such that
    \begin{equation*}
        \Gamma(f) = \{ (f(x), x) \in \mathbb{R} \times \Sigma \}
    \end{equation*}
    is Lagrangian in \( (\mathbb{R} \times M, d \lambda) \), where \( \lambda = e^s \alpha_+ + e^{-s} \alpha_- \).
\end{definition}

We provide a necessary condition for a transverse torus to be pre-Lagrangian. The obstruction comes from the foliations induced on the torus by the stable and unstable bundles of the flow. More precisely, let
\begin{equation*}
    TM = \langle X \rangle \oplus E^s \oplus E^u
\end{equation*}
be the \( \varphi_t \)-invariant hyperbolic splitting, \( X \) being the generator of the flow, and let
\begin{equation*}
    \begin{aligned}
        E^{ws} &= \langle X \rangle \oplus E^s , \\
        E^{wu} &= \langle X \rangle \oplus E^u
    \end{aligned}
\end{equation*}
be the weak stable and weak unstable bundles respectively. Let now \( \Sigma \subseteq M \) be an embedded torus transverse to \( X \). Since \( E^{ws} \) and \( E^{wu} \) contain \( X \), they induce \( 1 \)-dimensional (oriented) foliations \( \mathcal{F}^{ws}_{\Sigma} \) and \( \mathcal{F}^{wu}_{\Sigma} \) on \( \Sigma \). Identifying \( \Sigma \) with \( \mathbb{T}^2 \) via some diffeomorphism \( F \colon \Sigma \to \mathbb{T}^2 \), we view oriented foliations on \( \Sigma \) as maps
\begin{equation*}
    g \colon \mathbb{T}^2 \to \mathbb{R}^2 \setminus \{ 0 \} \simeq S^1,
\end{equation*}
which obviously depend on \( F \). Their homotopy class is completely characterized by the map \( g_* \colon \pi_1(\mathbb{T}^2) \to \pi_1(S^1) \).

\begin{theorem}\label{thm:obstruction}
    Let \( \Sigma \subseteq M \) be an embedded torus transverse to an Anosov flow \( \varphi_t \) on \( M \). Let \( g^{ws}, g^{wu} \colon \mathbb{T}^2 \to S^1 \) be the maps corresponding to the foliations \( \mathcal{F}^{ws}_{\Sigma}, \mathcal{F}^{wu}_{\Sigma} \) respectively. If \( \Sigma \) is pre-Lagrangian, then the maps \( g^{ws}_*, g^{wu}_* \colon \pi_1(\mathbb{T}^2) \to \pi_1(S^1) \) are trivial.
\end{theorem}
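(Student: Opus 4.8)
The plan is to convert the Lagrangian condition into a statement about a closed $1$-form on $\Sigma$, and then read off the topology of the induced foliations from it.

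\smallskip
\noindent\textbf{Step 1: the Lagrangian condition as a closedness condition.}
Assume $\Sigma$ is pre-Lagrangian, witnessed by an Anosov--Liouville pair $(\alpha_+,\alpha_-)$ supporting $\varphi_t$ and by $f\in C^\infty(\Sigma)$. Since $\lambda=e^s\alpha_++e^{-s}\alpha_-$ has no $ds$-component, the pull-back of $\lambda$ under the embedding $\iota\colon\Sigma\to\mathbb{R}\times M$, $x\mapsto(f(x),x)$, is the $1$-form $\beta_f:=e^{f}\,\alpha_+|_\Sigma+e^{-f}\,\alpha_-|_\Sigma$ on $\Sigma$. As $\Gamma(f)\cong\Sigma$ is $2$-dimensional inside the $4$-manifold $(\mathbb{R}\times M,d\lambda)$, it is Lagrangian if and only if it is isotropic, i.e.\ $0=\iota^*(d\lambda)=d(\iota^*\lambda)=d\beta_f$. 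So I would first record: \emph{$\Sigma$ is pre-Lagrangian if and only if there exist an Anosov--Liouville pair and an $f$ for which $\beta_f$ is a closed $1$-form on $\Sigma$}. Moreover $\beta_f$ is nowhere zero: if $\beta_f(x)=0$ then $\alpha_+|_\Sigma$ and $\alpha_-|_\Sigma$ are proportional at $x$, which, together with $\alpha_\pm(X)=0$ and $X\pitchfork\Sigma$, forces $\xi_+=\xi_-$ at $x$ — impossible for an Anosov--Liouville pair, where $\xi_+\cap\xi_-=\langle X\rangle$. Thus $\ker\beta_f$ is a nonsingular (oriented) line field on $\Sigma\cong\mathbb{T}^2$.

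\smallskip
\noindent\textbf{Step 2: all relevant line fields are homotopic.}
Because $\Sigma$ is transverse to $X$, a plane field on $M$ containing $\langle X\rangle$ meets $T\Sigma$ in a line, and distinct such plane fields meet $T\Sigma$ in distinct lines. Now $\xi_+$, $\xi_-$, $E^{ws}$, $E^{wu}$ all contain $\langle X\rangle$ and are pairwise distinct at every point of $M$: their coorienting forms lie in the rank-two annihilator $\langle X\rangle^{\circ}$, spanned by coorienting forms $\alpha^{s}$ of $E^{ws}$ and $\alpha^{u}$ of $E^{wu}$, and in Mitsumatsu's construction $\alpha_\pm$ is proportional to $\alpha^{u}\pm\alpha^{s}$, so no two of $\alpha^s,\alpha^u,\alpha_+,\alpha_-$ are proportional. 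Hence the restricted line fields $\xi_+|_\Sigma$, $\xi_-|_\Sigma$, $T\mathcal{F}^{ws}_\Sigma$, $T\mathcal{F}^{wu}_\Sigma$ are pairwise distinct at every point of $\Sigma$, and $\ker\beta_f$ is everywhere distinct from $\xi_+|_\Sigma$ (as $e^{f}\alpha_++e^{-f}\alpha_-$ is never proportional to $\alpha_+$). Using the elementary fact that two everywhere-distinct line fields on a surface are homotopic through line fields — their pointwise angular difference is a map into the interval $\mathbb{RP}^1\setminus\{0\}$ — I get $\ker\beta_f\simeq\xi_+|_\Sigma\simeq T\mathcal{F}^{ws}_\Sigma\simeq T\mathcal{F}^{wu}_\Sigma$ as line fields on $\mathbb{T}^2$. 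Consequently all four have the same class in $H^1(\mathbb{T}^2;\mathbb{Z})$, and the triviality of this class is equivalent to the triviality of $g^{ws}_*$ and of $g^{wu}_*$ (passing between an oriented foliation and its underlying line field only composes the classifying map with the double cover $S^1\to\mathbb{RP}^1$, and $\pi_1(S^1)$ is torsion-free). It therefore suffices to show that $\ker\beta_f$ is homotopically trivial.

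\smallskip
\noindent\textbf{Step 3: the key lemma.}
I claim that \emph{a closed, nowhere-vanishing $1$-form $\omega$ on $\mathbb{T}^2$ has homotopically trivial kernel line field}. Since $\omega$ is closed and nowhere zero, $[\omega]\neq0$ in $H^1(\mathbb{T}^2;\mathbb{R})$. By Tischler's theorem one may perturb $\omega$, through nowhere-vanishing closed $1$-forms (a $C^0$-small perturbation keeps $\ker\omega$ in the same homotopy class), to one with rational periods; after rescaling and an $SL_2(\mathbb{Z})$ change of coordinates one may assume $[\omega]$ is a positive multiple of a primitive integral class. Then $\int\omega$ defines a submersion $\mathbb{T}^2\to S^1$, hence a fibre bundle whose fibres are circles and are exactly the leaves of $\ker\omega$. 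Finally, a fibration $\mathbb{T}^2\to S^1$ is isotopic to a linear one, whose fibre tangent line field is constant; an isotopy is a homotopy of line fields, so $[\ker\omega]=0$. Applying this to $\beta_f$ and combining with Step 2 yields the theorem.

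\smallskip
\noindent\textbf{Expected main obstacle.}
The heart of the argument is the lemma in Step 3, and within it the input that a foliation of $\mathbb{T}^2$ with all leaves compact (equivalently, a fibration $\mathbb{T}^2\to S^1$) has homotopically trivial tangent line field. A naive interpolation from $\omega$ to its harmonic representative need not remain nowhere zero, so some structural input about fibrations of the torus is needed; one must also check that the auxiliary identifications (the $SL_2(\mathbb{Z})$ change of coordinates, the trivialization of the bundle) introduce no spurious winding — which is true because every diffeomorphism of $\mathbb{T}^2$ is isotopic to a linear one, so its differential is homotopic to a constant $GL_2^+$-valued map. A secondary point to get right is the precise description of $E^{ws},E^{wu}$ in terms of $(\alpha_+,\alpha_-)$ used in Step 2, ensuring the four plane fields are genuinely pairwise distinct for \emph{every} Anosov--Liouville pair supporting $\varphi_t$.
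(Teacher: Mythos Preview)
Your Steps 1 and 3 are correct and match the paper's strategy: translate the Lagrangian condition into closedness of a nowhere-vanishing $1$-form on $\Sigma$, then show such a form defines a homotopically trivial foliation (your Tischler argument is a valid alternative to the paper's suspension argument for this lemma).

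The gap is in Step 2. Your justification that no two of $\alpha^s,\alpha^u,\alpha_+,\alpha_-$ are proportional is only for Mitsumatsu's \emph{standard} pair $\alpha_\pm=\alpha_u\mp\alpha_s$. But the definition of pre-Lagrangian allows an \emph{arbitrary} Anosov--Liouville pair supporting $\varphi_t$, and for such a pair nothing you have written forces $\xi_+(p)\neq E^{ws}(p)$ at every point. You flag this as a ``secondary point'' but do not resolve it, and without it the chain $\ker\beta_f\simeq\xi_+|_\Sigma\simeq T\mathcal{F}^{ws}_\Sigma$ is unjustified.

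The paper closes exactly this gap, but by a different mechanism: rather than proving pointwise transversality, it invokes Massoni's theorem that the space of Anosov--Liouville pairs supporting $\varphi_t$ is contractible. This yields a path of such pairs from the given $(\alpha_+,\alpha_-)$ to the standard one; along the path $(\alpha_+^{(t)}+\alpha_-^{(t)})|_\Sigma$ stays nowhere zero (your Step 1 argument), so one gets a homotopy of line fields ending at $\ker\alpha_u|_\Sigma=T\mathcal{F}^{ws}_\Sigma$ directly. Your route can in fact be completed---the paper itself remarks later (citing Eliashberg--Thurston, Prop.\ 2.2.6) that $\ker\alpha_+|_\Sigma$ is always transverse to $\mathcal{F}^{ws}_\Sigma$ for any supporting pair---and with that input your argument would be more elementary than the paper's, avoiding the contractibility theorem. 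But as written, Step 2 is incomplete.
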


\begin{remarks} The following remarks are in order.
    \begin{enumerate}
        \item The maps \( g^{ws} \) and \( g^{wu} \) depend on the identification \( \Sigma \cong \mathbb{T}^2 \) we have chosen. However, it is implicit in the statement of the theorem that the fact that they induce the trivial map on \( \pi_1 \) does not.
        \item Since \( \mathcal{F}^{ws}_{\Sigma} \)  and \( \mathcal{F}^{wu}_{\Sigma} \) are transverse, the maps \( g^{ws} \) and \( g^{wu} \) induce the same map on \( \pi_1 \).
        \item The condition in \cref{thm:obstruction} is not sufficient. In a later section we will exhibit an Anosov flow admitting a transverse torus foliated by a homotopically trivial foliation which is, however, not pre-Lagrangian. This has to do with the fact that the induced maps on \( \pi_1 \) do not distinguish between isotopy classes of foliations, whereas being pre-Lagrangian does.
        \item \cref{thm:obstruction} shows that the answer to \cite[Question 7.4]{cieliebak2022} is in general negative. We can for instance consider the first example of a nontransitive Anosov flow, constructed by Franks and Williams in 1980 \cite{franks1980}. This flow has exactly one (up to isotopy) distinguished torus. The foliations induced on it are depicted in \cref{fig:franks-williams}. Since they are clearly not trivial on \( \pi_1 \), this torus is not pre-Lagrangian.
            \begin{figure}[h!]
                \centering
                \includegraphics[scale=1.1]{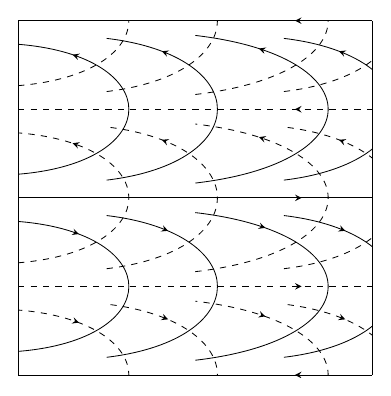}
                \caption{The foliations \( \mathcal{F}^{ws}_{\Sigma}, \mathcal{F}^{ws}_{\Sigma} \) in the Franks-Williams flow.}
                \label{fig:franks-williams}
            \end{figure}
    \end{enumerate}
\end{remarks}

Although, as noted above, the converse of \cref{thm:obstruction} is in general false, there are still cases in which something can be said. To make this precise, we borrow the following definition from Bonatti and Zhang.

\begin{definition}[{\cite[Def.\ 1.1]{bonatti2016}}]\label{def:parallel_compact}
    Two transverse foliations \( \mathcal{F} \) and \( \mathcal{G} \) on a torus have \textit{parallel compact leaves} if there exists a compact leaf of \( \mathcal{F} \) and a compact leaf of \( \mathcal{G} \) that are in the same free homotopy class.
\end{definition}

\begin{theorem}\label{thm:existence}
    Let \( \Sigma \subseteq M \) be an embedded torus transverse to an Anosov flow \( \varphi_t \) on \( M \) and assume that the foliations \( \mathcal{F}^{ws}_{\Sigma}, \mathcal{F}^{wu}_{\Sigma} \) do not have parallel compact leaves. Then, \( \Sigma \) is pre-Lagrangian.
\end{theorem}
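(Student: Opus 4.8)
\emph{Plan of proof.} The idea is to turn ``pre-Lagrangian'' into the existence of a closed $1$-form on $\Sigma$ of a constrained type, and then to produce such a form from the hypothesis on the two induced foliations.

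\emph{Step 1: reformulation.} For $\lambda=e^s\alpha_++e^{-s}\alpha_-$ and the graph embedding $\iota\colon\Sigma\hookrightarrow\mathbb R\times M$, $x\mapsto(f(x),x)$, one computes $\iota^*\lambda=e^f\,\alpha_+|_\Sigma+e^{-f}\,\alpha_-|_\Sigma$, so that $\Gamma(f)$ is Lagrangian precisely when this $1$-form is closed. Since $X\in\ker\alpha_\pm$ and $\Sigma\pitchfork X$, the restrictions $\alpha_\pm|_\Sigma$ are nowhere vanishing; setting $\sigma=\tfrac12(\alpha_++\alpha_-)|_\Sigma$ and $\tau=\tfrac12(\alpha_+-\alpha_-)|_\Sigma$, one checks from Mitsumatsu's construction that $\ker\sigma$ and $\ker\tau$ are, up to labelling, the line fields $\mathcal F^{ws}_\Sigma$ and $\mathcal F^{wu}_\Sigma$, and $\iota^*\lambda=2\cosh(f)\,\sigma+2\sinh(f)\,\tau$. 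Hence $\Sigma$ is pre-Lagrangian if and only if some Anosov--Liouville pair supporting $\varphi_t$ admits $f\in C^\infty(\Sigma)$ with $\cosh(f)\,\sigma+\sinh(f)\,\tau$ closed.

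\emph{Step 2: flexibility of the Anosov--Liouville pair.} Deforming the pair rescales, on $\Sigma$, the couple $(\sigma,\tau)$ to $(e^{u}|_\Sigma\,\sigma,\ e^{-u}|_\Sigma\,\tau)$ for $u$ in an open set of functions on $M$. Combining this with the freedom in $f$, it is enough to exhibit one closed nowhere-vanishing $1$-form $\eta$ on $\Sigma$ which, written as $A\sigma+B\tau$, has $A>0$ everywhere: then $f=\tfrac12\operatorname{arcsinh}(2AB)$ and $u|_\Sigma=\log(A/\cosh f)$ (extended to $M$ within the allowed range) solve $\cosh(f)\,e^{u}|_\Sigma\sigma+\sinh(f)\,e^{-u}|_\Sigma\tau=\eta$. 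Geometrically, $A>0$ means $\eta$ is transverse to $\mathcal F^{wu}_\Sigma$ with the correct coorientation; and since reversing the flow interchanges the two weak foliations and preserves the pre-Lagrangian property, transversality to $\mathcal F^{ws}_\Sigma$ would do equally well. So the whole problem reduces to producing a closed nowhere-vanishing $1$-form transverse to one of $\mathcal F^{ws}_\Sigma,\mathcal F^{wu}_\Sigma$.

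\emph{Step 3: producing $\eta$.} This is where the no-parallel-compact-leaves hypothesis enters. First, $g^{ws}_*$ and $g^{wu}_*$ coincide by transversality (cf.\ the remarks after \cref{thm:obstruction}), and each of them annihilates the homology class of any compact leaf of the corresponding foliation (the Gauss map restricted to an embedded essential circle is null-homotopic); as the two classes are non-parallel — or one of the foliations has no compact leaf, in which case the vanishing is immediate — the common homomorphism is zero, so both Gauss maps $\Sigma\to\mathbb{RP}^1$ are null-homotopic. One then upgrades this to the \emph{pointwise} statement by appealing to the structure theory of pairs of transverse foliations on the torus (Bonatti--Zhang) together with the rigidity of foliations induced on tori transverse to Anosov flows: one brings $(\mathcal F^{ws}_\Sigma,\mathcal F^{wu}_\Sigma)$ into a position in which the tangent directions of one of them avoid a fixed line, so that a constant — hence closed, nowhere vanishing — $1$-form pointing in that direction is the desired $\eta$. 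Feeding $\eta$ back into Step 2 produces the Anosov--Liouville pair and the function $f$ witnessing that $\Sigma$ is pre-Lagrangian.

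I expect the genuine difficulty to lie in Step 3: triviality on $\pi_1$ already holds under the weaker hypothesis of \cref{thm:obstruction}, so the argument must control the Reeb-type components of the induced foliations, which is exactly where ``no parallel compact leaves'' is indispensable. A secondary technical point is to pin down the deformation neighbourhood of Anosov--Liouville pairs used in Step 2, and to observe that the relevant line fields on $\Sigma$ are orientable once the $\pi_1$-maps are known to vanish, so that the coorientation bookkeeping causes no trouble.
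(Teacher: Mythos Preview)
Your outline is essentially the paper's proof: reduce ``pre-Lagrangian'' to the existence of a closed $1$-form on $\Sigma$ lying in the cone determined by $\alpha_u|_\Sigma$ and $\alpha_s|_\Sigma$, and then produce such a form from the Bonatti--Zhang separation theorem for pairs of transverse foliations without parallel compact leaves. What differs is the packaging of the endgame (the paper rescales by a constant and invokes a $C^1$-openness lemma rather than solving your equations for $f$ and $u$ exactly), which is cosmetic.

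The one point you underestimate is Step~2. The transformation $(\sigma,\tau)\mapsto(e^{u}\sigma,e^{-u}\tau)$ is \emph{not} what the $C^\infty(M)$-action $(\alpha_+,\alpha_-)\mapsto(e^{u}\alpha_+,e^{-u}\alpha_-)$ does to $(\sigma,\tau)$; that action gives a hyperbolic rotation $\sigma\mapsto\cosh(u)\sigma+\sinh(u)\tau$, etc., which is already absorbed by your graph function $f$ and contributes no new freedom. What you actually need is to rescale the \emph{defining pair} $(\alpha_u,\alpha_s)\mapsto(e^{u}\alpha_u,e^{-u}\alpha_s)$ on $\Sigma$, and then extend these rescalings to all of $M$ so that the new pair still satisfies $\mathcal L_X\tilde\alpha_u=\tilde r_u\tilde\alpha_u$ with $\tilde r_u>0$ (and similarly for $\tilde\alpha_s$). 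This is the genuine technical lemma in the paper: one uses a flow-box neighbourhood $\Sigma\times(-\delta,\delta)$ to solve the ODE along flow lines and then damp out to constants, checking that the expansion/contraction rates stay positive. It is not hard, but it is the place where work is required, and you flag it only as a ``secondary technical point.''

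Your Step~3 is correct in spirit but circuitously phrased. The $\pi_1$-triviality discussion is superfluous here; Bonatti--Zhang directly gives (after a diffeomorphism of the torus) a constant, hence closed, $1$-form whose kernel is transverse to \emph{both} induced foliations, i.e.\ an $\eta=A\sigma+B\tau$ with $A$ and $B$ both nonvanishing --- which is more than your reduction needs and exactly what the paper uses.
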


A very special case of this, for which the statement can be explicitly checked, is the \( \mathbb{T}^2 \)-fibers of suspension flows. There, the induced foliations are affine.

In \cref{sec:preliminaries} we cover the necessary background and basic definitions we are going to need, both from Anosov dynamics and symplectic geometry. We also discuss foliations on the torus, specifically focusing on the ones defined by nowhere-vanishing closed \( 1 \)-forms, which play a prominent role in our work. In \cref{sec:main_proof} we prove \cref{thm:obstruction} and we show that the condition in the theorem is not sufficient by exhibiting an explicit counterexample. Finally, in \cref{sec:examples}, we give a proof of \cref{thm:existence}.

\subsection{Acknowledgments}
I would like to thank Peter Albers, Tom Stalljohann and Tobias Witt for pointing out a mistake and for helpful discussions. I am also grateful to Kai Cieliebak for his feedback and input.
This research was funded by the Deutsche Forschungsgemeinschaft (DFG, German Research Foundation) – 281869850 (RTG 2229).

\section{Preliminaries}\label{sec:preliminaries}
In what follows, \( M \) is a closed, oriented three-manifold and any flow we consider is assumed to be smooth.

\subsection{Anosov flows}
In this section we recall the main definitions and facts about Anosov dynamics we are going to need in later sections.

\begin{definition}
    A nonsingular flow \( \varphi_t \colon M \to M \) is \textit{Anosov} if there exists a continuous \( \varphi_t \)-invariant splitting
    \begin{equation*}
        TM = \langle X \rangle \oplus E^s \oplus E^u
    \end{equation*}
    into line bundles and constants \( C \geq 1, \lambda > 1 \) such that:
    \begin{enumerate}
        \item \( X = \dot{\varphi_t} \) is the generator of the flow;
        \item the flow contracts (resp.\ expands) exponentially \( E^s \) (resp.\ \( E^u \)):
            \begin{equation*}
                \begin{aligned}
                    \norm{d \varphi_t |_{E^s}} &< C\lambda^t, \\
                    \norm{d \varphi_{t} |_{E^u}} &> C \lambda^{t}
                \end{aligned}
            \end{equation*}
            for all \( t > 0 \). \label{itm:hyperbolic_set}
    \end{enumerate}
\end{definition}

\begin{remark}
    The metric with respect to which we compute the norm of \( d \varphi_t \) is not relevant. Any other metric will do (up to changing the constants), given that \( M \) is closed by assumption.
\end{remark}

The topological dynamics of Anosov flows is completely described by the celebrated \textit{spectral decomposition theorem} proven by Smale in 1967 \cite{smale1967}. In order to state it, let us briefly recall some notions of recurrence.

\begin{definition} Let \( \varphi_t \colon M \to M \) be a flow.
    \begin{enumerate}
        \item A point \( p \in M \) is \textit{nonwandering} if for every neighborhood \( U \) of \( p \) and every \( T > 0 \) there exists \( t > T \) such that \( U \cap \varphi_t(U) \neq \emptyset \).
        \item \( \varphi_t \) is \textit{transitive} if it admits a dense forward orbit.
    \end{enumerate}
\end{definition}
Note that the set \( \Omega(\varphi_t) \) of nonwandering points is compact and \( \varphi_t \)-invariant. Moreover if \( \varphi_t \) is transitive, then \( \Omega(\varphi_t) = M \).

In general, the topological dynamics of a flow always have a recurrent and a transient part, which can be seen using standard results such as the existence of Lyapunov functions. Smale's spectral decomposition theorem gives a more precise description of the recurrent dynamics of the flow, under the assumption that it is hyperbolic.
We will state the theorem only for Anosov flows. While it holds under weaker assumptions, we will not need that degree of generality here. A thorough presentation can be found in \cite{smale1967}.

\begin{theorem}[Smale's spectral decomposition theorem]
    Let \( \varphi_t \colon M \to M \) be an Anosov flow. Then, \( \Omega(\varphi_t) \) is the disjoint union
    \begin{equation*}
        \Omega(\varphi_t) = \coprod_{j = 1}^k \Lambda_j
    \end{equation*}
    of compact, locally maximal, \( \varphi_t \)-invariant subsets \( \Lambda_j \) on which the flow is transitive.
\end{theorem}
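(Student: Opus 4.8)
The plan is to build the basic sets $\Lambda_j$ as closures of equivalence classes of periodic orbits under a heteroclinic relation, exploiting the local product structure that the hyperbolic splitting furnishes on all of $M$, and hence on the closed invariant set $\Omega(\varphi_t)$. Before doing so I would record two preliminaries. The first is the \emph{Anosov closing lemma}: if $p \in \Omega(\varphi_t)$, a near-return $\varphi_T(U) \cap U \neq \emptyset$ can be corrected, using contraction along $E^s$ and expansion along $E^u$, to an honest periodic orbit passing $\varepsilon$-close to $p$; hence the periodic orbits are dense in $\Omega(\varphi_t)$. The second is the \emph{local product structure} of an Anosov flow: there are $\varepsilon, \delta > 0$ so that whenever $d(x,y) < \delta$ the local weak-stable leaf through $x$ meets the local weak-unstable leaf through $y$ in a single point $[x,y]$, depending continuously on $(x,y)$, and this bracket restricts to $\Omega(\varphi_t)$ once one checks that the stable and unstable sets of a nonwandering point consist of nonwandering points.

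Next I would define, on the set of periodic points, the relation $p \sim q$ meaning that $W^{wu}(\mathcal{O}(p))$ meets $W^{ws}(\mathcal{O}(q))$ and $W^{ws}(\mathcal{O}(p))$ meets $W^{wu}(\mathcal{O}(q))$; by a dimension count in the three-manifold these intersections are automatically transverse away from the flow direction. Reflexivity and symmetry are built into the definition. Transitivity is the heart of the matter: from a heteroclinic point in $W^{wu}(\mathcal{O}(p)) \cap W^{ws}(\mathcal{O}(q))$, the inclination ($\lambda$-)lemma for flows together with continuity of the bracket forces $W^{wu}(\mathcal{O}(q))$ to accumulate on $W^{wu}(\mathcal{O}(p))$, which is exactly what is needed to splice a connection $p \to q$ with a connection $q \to r$ into a connection $p \to r$.

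I would then set $\Lambda_j = \overline{[\mathcal{O}_j]}$, the closure of an equivalence class. Density of periodic points shows the $\Lambda_j$ cover $\Omega(\varphi_t)$; they are pairwise disjoint because a common point could be approximated by periodic points of both classes, and bracketing them would merge the classes. Each $\Lambda_j$ is closed in the compact set $\Omega(\varphi_t)$, hence compact, and $\varphi_t$-invariant since the relation is flow-invariant. Hyperbolicity and shadowing give $\Lambda_j$ an isolating neighbourhood — orbits that stay near it are shadowed by orbits of $\Lambda_j$ itself — so it is locally maximal; the isolating neighbourhoods may be taken pairwise disjoint, and compactness of $M$ then bounds their number, yielding finiteness. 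For transitivity of $\varphi_t|_{\Lambda_j}$: given relatively open $U, V \subseteq \Lambda_j$, choose periodic $p \in U$, $q \in V$ with $p \sim q$; an orbit through a point of $W^{wu}(\mathcal{O}(p)) \cap W^{ws}(\mathcal{O}(q))$ runs from a neighbourhood of $\mathcal{O}(p)$ to a neighbourhood of $\mathcal{O}(q)$, hence from $U$ into $V$, and a Baire-category argument over a countable basis of $\Lambda_j$ upgrades this to a point with dense forward orbit.

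The step I expect to be the main obstacle is the transitivity of $\sim$: making the $\lambda$-lemma for flows and the continuous dependence of the local product structure precise enough to genuinely compose heteroclinic orbits — and, closely tied to it, verifying that the $\Lambda_j$ are locally maximal, which is what forces the decomposition to be finite.
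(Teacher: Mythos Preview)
The paper does not prove this theorem; it is quoted as classical background with a pointer to Smale's 1967 paper, so there is no in-paper argument to compare against. Your outline is the standard route to the spectral decomposition (closing lemma, heteroclinic equivalence on periodic orbits, $\lambda$-lemma for transitivity of the relation, closures of classes as the basic sets) and is essentially correct as a sketch.

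One genuine gap, though: you justify the local product structure of $\Omega(\varphi_t)$ by asserting that ``the stable and unstable sets of a nonwandering point consist of nonwandering points''. That claim is false precisely for the nontransitive Anosov flows this paper cares about. In the Franks--Williams flow, for instance, the stable manifold of a point in the attractor contains wandering points travelling across from the repeller side. The conclusion you want --- that $[x,y] \in \Omega(\varphi_t)$ whenever $x,y \in \Omega(\varphi_t)$ are $\delta$-close --- is still true, but it has to be proved differently: approximate $x$ and $y$ by periodic orbits via the closing lemma, concatenate long segments of those orbits passing near $[x,y]$ into a pseudo-orbit, and shadow to produce a genuine periodic orbit nearby, forcing $[x,y]$ into $\Omega(\varphi_t)$. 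Once you patch this step the rest of your plan goes through unchanged; the finiteness of the $\Lambda_j$ then follows most cleanly by observing that each $\Lambda_j$ is open in $\Omega(\varphi_t)$ (again from the local product structure), so compactness of $\Omega(\varphi_t)$ bounds their number.
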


A subset \( \Lambda \subseteq M \) is \textit{locally maximal} if it admits a neighborhood \( U \) such that \( \bigcap_{t \in \mathbb{R}} \varphi_t(U) = \Lambda \). The \( \Lambda_j \)'s in the above proposition are usually called \textit{basic sets} in the literature. An obvious consequence of the theorem is that \( \varphi_t \) is transitive if and only if its spectral decomposition is \( \{ M \} \).

In the case of nontransitive flows, the spectral decomposition theorem comes with an associated topological decomposition, as the following result shows.

\begin{theorem}[\cite{brunella1993}]
    Let \( \varphi_t \colon M \to M \) be a nontransitive Anosov flow with spectral decomposition \( \Omega(\varphi_t) = \coprod_{j = 1}^k \Lambda_j \). Then, there exists a finite collection of pairwise disjoint, pairwise nonisotopic, incompressible embedded tori \( \{ \Sigma_i \}_{i = 1, \dots, n} \) such that
    \begin{enumerate}
        \item \( \bigcup_{i = 1}^n \Sigma_j \cap \Omega(\varphi_t) = \emptyset \),
        \item each connected component of \( M \setminus \bigcup_{i = 1}^n \Sigma_j \) contains exactly one the \( \Lambda_j \)'s.
    \end{enumerate}
\end{theorem}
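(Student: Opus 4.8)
The plan is to obtain the separating tori as regular level sets of a Lyapunov function adapted to the spectral decomposition, and then to prune the resulting collection. Since \( \varphi_t \) is Anosov, all of \( M \) is a hyperbolic set, so the shadowing lemma holds globally and the chain recurrent set coincides with \( \Omega(\varphi_t) \), its chain recurrence classes being exactly the basic sets \( \Lambda_j \). Conley's fundamental theorem of dynamical systems then furnishes a complete Lyapunov function, which I would smooth to obtain \( L \in C^{\infty}(M) \) that is strictly decreasing along orbits on \( M \setminus \Omega(\varphi_t) \) and whose values on distinct basic sets lie in pairwise disjoint closed intervals \( I_1 < \dots < I_k \subseteq \mathbb{R} \). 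The only care needed here is to smooth the a priori merely continuous Conley function while preserving strict decrease off \( \Omega(\varphi_t) \) and the separation of the basic-set values into disjoint intervals.

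Next I would choose values \( a_j \) in the gaps, \( \sup I_j < a_j < \inf I_{j+1} \). Then \( L^{-1}(a_j) \cap \Omega(\varphi_t) = \emptyset \), and since \( dL(X) < 0 \) on \( M \setminus \Omega(\varphi_t) \), at every point of \( L^{-1}(a_j) \) we have \( dL \neq 0 \); thus each \( a_j \) is automatically a regular value (no appeal to Sard is needed) and \( X \) is transverse to \( L^{-1}(a_j) \). Each \( L^{-1}(a_j) \) is therefore a closed surface transverse to the flow and disjoint from \( \Omega(\varphi_t) \), and these are pairwise disjoint for distinct \( j \). To identify the components as tori I would use the weak-stable bundle: since \( X \in E^{ws} \) is transverse to the surface, the \( 2 \)-plane field \( E^{ws} \) meets the tangent space of each component in a nonsingular line field, forcing Euler characteristic zero; being co-oriented by \( dL \) inside the oriented \( M \), each component is orientable, hence a torus. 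Incompressibility I would take as known, being Brunella's observation, which rests on the tautness of the Anosov weak foliations together with Novikov's theorem.

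The cut locus \( \bigcup_j L^{-1}(a_j) \) splits \( M \) into the open slabs \( \{ a_{j-1} < L < a_j \} \) (with \( a_0 = -\infty \), \( a_k = +\infty \)), and by the choice of the \( a_j \) the slab indexed \( j \) meets \( \Omega(\varphi_t) \) in exactly \( \Lambda_j \). Consequently every connected component of the complement contains \emph{at most} one basic set, and it remains only to eliminate the components containing none. If such a basic-set-free component \( C \) occurs, then \( \overline{C} \cap \Omega(\varphi_t) = \emptyset \), so \( dL(X) \leq -\delta < 0 \) on the compact set \( \overline{C} \); hence every orbit crosses \( \overline{C} \) in bounded time, yielding a product structure \( \overline{C} \cong T^2 \times [0,1] \) whose two boundary tori are isotopic. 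Deleting one of them merges \( C \) with an adjacent piece without altering its nonwandering content. After finitely many such deletions and discarding any remaining isotopic duplicates, the surviving tori \( \{ \Sigma_i \} \) are pairwise disjoint, pairwise nonisotopic, incompressible, disjoint from \( \Omega(\varphi_t) \), and cut \( M \) into regions each containing exactly one \( \Lambda_j \).

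The \emph{main obstacle} is this final cleanup: establishing rigorously that a basic-set-free complementary region is a genuine product (so that removing a boundary torus is legitimate and the count is upgraded from "at most one" to "exactly one"), and organizing the finitely many removals and merges coherently so that the resulting family is simultaneously pairwise nonisotopic and realizes the claimed separation. A secondary difficulty, of a different nature, is the incompressibility input, which genuinely requires foliation theory (tautness of \( \mathcal{F}^{ws}_{\Sigma} \) and Novikov's theorem) rather than the soft Conley–Lyapunov machinery that drives the rest of the argument.
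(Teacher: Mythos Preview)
The paper does not prove this theorem: it is stated as a citation to Brunella \cite{brunella1993}, and the only comment on the argument is the sentence immediately following it, namely that the tori ``arise as regular level sets of a Lyapunov function for \( \varphi_t \).'' Your proposal follows exactly this route, so there is nothing to compare against beyond that one-line hint, with which you are fully consistent.

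On the substance of your sketch, the strategy is correct and the obstacles you flag are the right ones. One small inaccuracy in the cleanup step: a basic-set-free component \( C \) need not have connected entrance boundary, so the conclusion should read \( \overline{C} \cong \partial_{\mathrm{in}} C \times [0,1] \) with \( \partial_{\mathrm{in}} C \) a disjoint union of tori, rather than a single \( T^2 \times [0,1] \). The flow matches each entrance torus with a unique exit torus, and you delete one torus from each such matched pair; the merging argument goes through unchanged. The final ``discard isotopic duplicates'' step also deserves a word of justification: if two of the surviving tori are isotopic, the region between them (using incompressibility and irreducibility of \( M \), the latter being a consequence of the taut Anosov foliations) is again a product and hence contains no basic set, so removing one of the pair does not disturb the separation. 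With these refinements the argument is complete.
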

What the theorem says is that there exists a collection of distinguished tori that \enquote{separate} the basic sets. Moreover, from the proof one sees that these tori are transverse to the flow, as they arise as regular level sets of a Lyapunov function for \( \varphi_t \).

\subsection{Anosov-Liouville pairs}\label{subsec:al}
In this section we give introduce Anosov-Liouville pairs, which provide a contact topological framework for working with Anosov flows. The original idea behind this construction is due to Mitsumatsu \cite{mitsumatsu1995}. We follow the presentation in \cite{massoni2023} (see also \cite{eliashberg1998}, \cite{hozoori2022}).

\begin{definition}\label{def:liouville}
    \mbox{}
    \begin{enumerate}
        \item
            A pair \( (\alpha_+, \alpha_-) \in \Omega^{1}(M) \times \Omega^{1}(M) \) is called a \textit{Liouville pair} if
            \begin{equation*}
                \lambda = e^s \alpha_+ + e^{-s}\alpha_-
            \end{equation*}
            defines a Liouville form on \( \mathbb{R}_s \times M \).
        \item \( (\alpha_+, \alpha_-) \) is called an \textit{Anosov-Liouville pair} if both \( (\alpha_+, \alpha_-) \) and \( (\alpha_+, -\alpha_-) \) are Liouville pairs.
    \end{enumerate}

\end{definition}

\begin{remarks}
    We note the following:
    \begin{enumerate}
        \item from the above definition, one easily sees that if \( (\alpha_+, \alpha_-) \) is an Anosov-Liouville pair, then \( \alpha_+ \) (resp.\ \( \alpha_- \)) is a positive (resp.\ negative) contact form and the contact structures \( \xi_{\pm} = \mathop{\mathrm{ker}} \alpha_{\pm} \) are transverse. This is what is usually called a \textit{bicontact structure} in the literature. We say that \( (\alpha_+, \alpha_-) \) (and its associated bicontact structure) \textit{supports} \( \varphi_t \) if \( \xi_+ \cap \xi_- = \langle X \rangle \), where \( X \in \mathfrak{X}(M) \) generates the flow.
        \item Not all bicontact structures arise from Anosov-Liouville pairs \cite{massoni2023} (see also \cite{eliashberg1998}).
        \item \( C^{\infty}(M) \) acts on the space of Anosov-Liouville pairs supporting a given flow by
            \begin{equation*}
                f \cdot (\alpha_+, \alpha_-) = (e^f \alpha_+, e^{-f} \alpha_-).
            \end{equation*}
    \end{enumerate}
\end{remarks}

Here is a simple characterization of Anosov-Liouville pairs.
\begin{lemma}[{\cite[Lemma 2.7]{massoni2023}}]\label{lemma:al_characterization}
    Let \( \operatorname{dvol} \) be any volume form on \( M \) and \( \alpha_{\pm} \in \Omega^1(M) \). Write
    \begin{equation*}
        \begin{aligned}
            &\alpha_+ \wedge d \alpha_+ = f_+ \operatorname{dvol}, \\
            &\alpha_- \wedge d \alpha_- = -f_- \operatorname{dvol}, \\
            &d(\alpha_- \wedge \alpha_+) = f_0 \operatorname{dvol}
        \end{aligned}
    \end{equation*}
    for smooth functions \( f_{\pm}, f_0 \colon M \to \mathbb{R} \). Then, \( (\alpha_+, \alpha_-) \) is Anosov-Liouville if and only if \( f_{\pm} > 0 \) and \( f_0^2 < 4 f_+ f_- \).
\end{lemma}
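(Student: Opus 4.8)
The plan is to treat this for what it is — a direct computation — by evaluating the top exterior power of \(d\lambda\) and reading off when it is nondegenerate, first for \(\lambda=e^{s}\alpha_{+}+e^{-s}\alpha_{-}\) and then for the variant with \(-\alpha_{-}\), and finally converting the two resulting pointwise conditions into an elementary statement about real quadratics. So, first I would compute \((d\lambda)^{2}\). Since \(\dim(\mathbb{R}\times M)=4\) and \(d\lambda\) is automatically closed, \(d\lambda\) is symplectic exactly when \((d\lambda)\wedge(d\lambda)\) is a nowhere-vanishing \(4\)-form. From \(d(e^{\pm s})=\pm e^{\pm s}\,ds\) one gets
\[
 d\lambda \;=\; ds\wedge\bigl(e^{s}\alpha_{+}-e^{-s}\alpha_{-}\bigr)\;+\;\bigl(e^{s}d\alpha_{+}+e^{-s}d\alpha_{-}\bigr).
\]
When this is squared, the purely "horizontal" part contributes nothing (a \(4\)-form on \(\mathbb{R}\times M\) with no \(ds\) must vanish because \(M\) is \(3\)-dimensional), \(ds\wedge ds=0\) kills another piece, and only the cross term survives:
\[
 (d\lambda)^{2}\;=\;2\,ds\wedge\bigl(e^{s}\alpha_{+}-e^{-s}\alpha_{-}\bigr)\wedge\bigl(e^{s}d\alpha_{+}+e^{-s}d\alpha_{-}\bigr).
\]
Expanding the \(M\)-factor and substituting \(\alpha_{+}\wedge d\alpha_{+}=f_{+}\operatorname{dvol}\), \(\alpha_{-}\wedge d\alpha_{-}=-f_{-}\operatorname{dvol}\), and \(\alpha_{+}\wedge d\alpha_{-}-\alpha_{-}\wedge d\alpha_{+}=d(\alpha_{-}\wedge\alpha_{+})=f_{0}\operatorname{dvol}\), this becomes
\[
 (d\lambda)^{2}\;=\;2\bigl(e^{2s}f_{+}+f_{0}+e^{-2s}f_{-}\bigr)\,ds\wedge\operatorname{dvol}.
\]
Hence \((\alpha_{+},\alpha_{-})\) is a Liouville pair if and only if \(e^{2s}f_{+}(x)+f_{0}(x)+e^{-2s}f_{-}(x)\) is nowhere zero on \(\mathbb{R}\times M\); since \(\mathbb{R}\times M\) is connected this expression then has a constant sign, which the usual orientation convention fixes to be positive.

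Next I would run the same computation for \((\alpha_{+},-\alpha_{-})\). Replacing \(\alpha_{-}\) by \(-\alpha_{-}\) leaves \(f_{+}\) and \(f_{-}\) unchanged (because \((-\alpha_{-})\wedge d(-\alpha_{-})=\alpha_{-}\wedge d\alpha_{-}\)) and sends \(f_{0}\) to \(-f_{0}\). Therefore \((\alpha_{+},\alpha_{-})\) is Anosov–Liouville if and only if both \(e^{2s}f_{+}+f_{0}+e^{-2s}f_{-}>0\) and \(e^{2s}f_{+}-f_{0}+e^{-2s}f_{-}>0\) hold for all \(s\in\mathbb{R}\) and \(x\in M\). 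Setting \(u=e^{2s}\), which ranges over \((0,\infty)\), and clearing the positive factor \(u\), the first inequality says \(f_{+}u^{2}+f_{0}u+f_{-}>0\) for all \(u>0\), and the second, after the further substitution \(u\mapsto -u\), says \(f_{+}u^{2}+f_{0}u+f_{-}>0\) for all \(u<0\); together, the quadratic \(q_{x}(t)=f_{+}(x)t^{2}+f_{0}(x)t+f_{-}(x)\) is positive on \(\mathbb{R}\setminus\{0\}\) for every \(x\).

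Finally I would invoke the elementary fact that a real quadratic \(q(t)=at^{2}+bt+c\) is strictly positive on all of \(\mathbb{R}\) precisely when \(a>0\) and \(b^{2}<4ac\) (which forces \(c>0\) as well). For the converse direction one notes that if \(q>0\) off the origin then letting \(|t|\to\infty\) gives \(a\ge 0\), letting \(t\to 0\) gives \(c\ge 0\), and — in the nondegenerate situation relevant here — the vertex \(t=-b/(2a)\) of the parabola lies off the origin, so \(q(-b/(2a))=c-b^{2}/(4a)>0\), i.e.\ \(b^{2}<4ac\), which in turn makes \(a\) and \(c\) strictly positive. Applying this pointwise with \((a,b,c)=(f_{+}(x),f_{0}(x),f_{-}(x))\) yields exactly \(f_{\pm}>0\) and \(f_{0}^{2}<4f_{+}f_{-}\), which is the assertion.

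I do not expect a serious obstacle here; the heart of the matter is the one line \((d\lambda)^{2}=2(e^{2s}f_{+}+f_{0}+e^{-2s}f_{-})\,ds\wedge\operatorname{dvol}\). The two places that call for care are the orientation convention — it is this, rather than bare nondegeneracy, that upgrades "\((d\lambda)^{2}\) nowhere zero" to the sign condition "\(e^{2s}f_{+}+f_{0}+e^{-2s}f_{-}>0\)" — and the degenerate boundary cases of the quadratic (for instance \(f_{0}\equiv 0\) together with \(f_{+}\) or \(f_{-}\) vanishing), which are precisely the configurations ruled out by demanding the strict inequality \(f_{0}^{2}<4f_{+}f_{-}\); keeping the bookkeeping "\(q_{x}>0\) on \(\mathbb{R}\setminus\{0\}\)" straight, rather than merely "on \((0,\infty)\)", is what encodes them correctly.
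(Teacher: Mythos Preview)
The paper does not prove this lemma; it is quoted verbatim from \cite{massoni2023} and used as a black box. So there is no argument in the paper to compare against, and your direct computation of \((d\lambda)^2\) followed by the quadratic analysis is exactly the standard route.

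The computation itself is correct, including the reduction to ``\(q_x(t)=f_+t^2+f_0t+f_->0\) for all \(t\in\mathbb{R}\setminus\{0\}\)''. The only loose point is the final equivalence. As you yourself flag, the vertex argument needs \(a>0\) and \(b\neq 0\); but the degenerate configurations \((a>0,\,b=c=0)\) and \((a=b=0,\,c>0)\) do satisfy \(q>0\) on \(\mathbb{R}\setminus\{0\}\) while violating \(a>0\) and \(b^2<4ac\). You cannot dispose of them by ``demanding the strict inequality \(f_0^2<4f_+f_-\)'', since that is the conclusion you are trying to reach. These cases are precisely where \(\alpha_+\) or \(\alpha_-\) fails to be a contact form (e.g.\ \(\alpha_-\equiv 0\) already gives an example), and they are excluded in the source because there a Liouville pair is, by convention, a pair of contact forms --- as the present paper also says in its introduction, though its formal Definition of Liouville pair omits this. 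Once contactness is assumed, \(f_\pm\) never vanish, the degenerate cases disappear, and your vertex argument goes through (when \(b=0\) the vertex sits at \(t=0\), but then \(q(0)=c=f_->0\) gives \(b^2=0<4ac\) directly). With that one-line clarification the proof is complete.
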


Let us now consider an Anosov flow \( \varphi_t \) and a generator \( X \in \mathfrak{X}(M) \). We have a \( \varphi_t \)-invariant splitting
\begin{equation*}
    TM = \langle X \rangle \oplus E^s \oplus E^u,
\end{equation*}
with the flow exponentially contracting (resp.\ expanding) \( E^s \) (resp.\ \( E^u \)).
We will assume that \( E^s \) and \( E^u \) are orientable, which can always be arranged by passing to a double cover.
Since \( \varphi_t \) is smooth, the weak stable and weak unstable bundles
\begin{equation*}
    \begin{aligned}
        E^{ws} &= E^s \oplus \langle X \rangle, \\
        E^{wu} &= E^u \oplus \langle X \rangle
    \end{aligned}
\end{equation*}
are \( C^1 \) by \cite[Corollary 1.8]{hasselblatt1994}. There exist \( C^1 \) 1-forms \( \alpha_u \) and \( \alpha_s \) such that \( \mathop{\mathrm{ker}} \alpha_s = E^{wu} \), \( \mathop{\mathrm{ker}} \alpha_u = E^{wu} \) and
\begin{equation*}
    \begin{aligned}
        \mathcal{L}_X \alpha_u &= r_u \alpha_u, \\
        \mathcal{L}_X \alpha_s &= r_s \alpha_s,
    \end{aligned}
\end{equation*}
where \( r_u > 0 \) and \( r_s < 0 \) are \( C^1 \) functions on \( M \) (for the exact details see \cite{hozoori2022} and \cite{massoni2023}). Then, one sees that \( \alpha_{\pm} = \alpha_u \mp \alpha_s \) (and any \( C^{\infty} \)-close smoothing thereof) form an Anosov-Liouville pair with the contact structures \( \xi_{\pm} = \mathop{\mathrm{ker}} \alpha_{\pm} \) intersecting along the direction \( X \) of the flow. The pair \( (\alpha_u, \alpha_s) \) is called a \textit{defining pair} for \( \varphi_t \) and we say that \( (\alpha_+, \alpha_-) \) is a \textit{standard Anosov-Liouville pair} supporting \( \varphi_t \).

The relation between Anosov flows and Anosov-Liouville pairs is clarified by the following result.
\begin{theorem}[{\cite[Theorem 1]{massoni2023}}]
    Let \( \varphi_t \colon M \to M \) be a smooth nonsingular flow. Then, \( \varphi_t \) is Anosov if and only there exists an Anosov-Liouville pair \( (\alpha_+, \alpha_-) \) supporting it.
\end{theorem}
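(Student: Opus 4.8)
The plan is to prove the two implications separately, treating the forward direction as essentially the Mitsumatsu construction already assembled in the preliminaries and reserving the real work for the converse.

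For the forward implication, suppose \( \varphi_t \) is Anosov with invariant splitting \( TM = \langle X \rangle \oplus E^s \oplus E^u \). After passing to a double cover I would arrange \( E^s, E^u \) orientable, so that the weak bundles \( E^{ws}, E^{wu} \) are \( C^1 \) by \cite[Corollary 1.8]{hasselblatt1994}. As recalled above, one produces \( C^1 \) forms \( \alpha_u, \alpha_s \) with \( \ker \alpha_u = E^{ws} \), \( \ker \alpha_s = E^{wu} \) and \( \mathcal{L}_X \alpha_u = r_u \alpha_u \), \( \mathcal{L}_X \alpha_s = r_s \alpha_s \), \( r_u > 0 > r_s \); the eigen-equations encode precisely the exponential expansion and contraction of the transverse directions. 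Setting \( \alpha_{\pm} = \alpha_u \mp \alpha_s \), it remains to verify the Anosov-Liouville property, which by \cref{lemma:al_characterization} reduces to the pointwise inequalities \( f_{\pm} > 0 \) and \( f_0^2 < 4 f_+ f_- \); these follow from the signs \( r_u > 0 > r_s \) together with the transversality \( E^{ws} \pitchfork E^{wu} \). A final \( C^{\infty} \)-small smoothing preserves the open conditions of \cref{lemma:al_characterization}, producing a genuine smooth pair supporting \( \varphi_t \).

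For the converse I am given a smooth Anosov-Liouville pair \( (\alpha_+, \alpha_-) \) with \( \xi_+ \cap \xi_- = \langle X \rangle \) and must recover a hyperbolic splitting. First I would undo the change of variables, setting \( \alpha_u = \tfrac12(\alpha_+ + \alpha_-) \) and \( \alpha_s = \tfrac12(\alpha_- - \alpha_+) \), so that both forms annihilate \( X \) (because \( \alpha_{\pm}(X) = 0 \)) and the plane fields \( \ker \alpha_u, \ker \alpha_s \) contain \( \langle X \rangle \). Passing to the quotient bundle \( N = TM / \langle X \rangle \), their images are two line fields whose transversality is guaranteed by the discriminant inequality \( f_0^2 < 4 f_+ f_- \) of \cref{lemma:al_characterization}, while the signs \( f_{\pm} > 0 \) will force a definite, pointwise expansion of one and contraction of the other. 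The key computation is that, since \( \alpha_u(X) = \alpha_s(X) = 0 \), one has \( \mathcal{L}_X \alpha_u = \iota_X d\alpha_u \) and \( \mathcal{L}_X \alpha_s = \iota_X d\alpha_s \), and the contact inequalities translate into strict positivity, respectively negativity, of the infinitesimal transverse rates associated to \( \ker \alpha_u \) and \( \ker \alpha_s \).

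From here the strategy is to show that the derivative cocycle of \( \varphi_t \) on \( N \) is \emph{projectively Anosov}, i.e.\ admits a dominated splitting, and then to upgrade domination to genuine hyperbolicity. The plane fields \( \ker \alpha_u, \ker \alpha_s \) serve as a pair of transverse cone fields; the sign computations above show they are preserved and strictly contracted, respectively expanded, by the transverse cocycle, so integrating yields exponential domination and forces continuous invariant line fields in \( N \) obtained as the forward and backward limits of the cones. Lifting these to line bundles \( E^s, E^u \subseteq TM \) transverse to \( X \) gives a continuous \( \varphi_t \)-invariant splitting \( TM = \langle X \rangle \oplus E^s \oplus E^u \). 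The main obstacle is promoting the dominated (ratio) estimate to the \emph{absolute} contraction and expansion demanded by the definition of Anosov: this is exactly where the full Anosov-Liouville condition is needed, as the weaker bicontact condition alone yields only projective hyperbolicity. The discriminant inequality \( f_0^2 < 4 f_+ f_- \) controls the transverse Jacobian — equivalently, the \( d\lambda \)-volume growth on \( \mathbb{R}_s \times M \) — and combining this control of the product of the two transverse rates with the domination of their ratio pins each rate down separately. Since the Anosov property is invariant under positive time reparametrization, I am free to rescale \( X \) to absorb any pointwise normalization, after which \( \| d\varphi_t|_{E^s} \| \) and \( \| d\varphi_t|_{E^u} \|^{-1} \) decay exponentially, completing the proof that \( \varphi_t \) is Anosov.
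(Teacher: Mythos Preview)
The paper does not prove this theorem; it is quoted from \cite{massoni2023} as background in \cref{subsec:al}, with only the forward direction sketched (the construction of the standard Anosov--Liouville pair from a defining pair \( (\alpha_u,\alpha_s) \)). So there is no proof in the paper to compare against, and your forward implication simply reproduces that sketch.

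For the converse, your outline is in the right spirit but contains a genuine gap. When you set \( \alpha_u = \tfrac12(\alpha_+ + \alpha_-) \) and \( \alpha_s = \tfrac12(\alpha_- - \alpha_+) \), the resulting plane fields \( \ker\alpha_u \) and \( \ker\alpha_s \) are \emph{not} invariant under \( \varphi_t \): the eigen-relations \( \mathcal{L}_X\alpha_u = r_u\alpha_u \), \( \mathcal{L}_X\alpha_s = r_s\alpha_s \) do not hold for an arbitrary Anosov--Liouville pair, so you cannot say these plane fields are ``preserved''. What the bicontact condition actually gives you are cone fields in \( N = TM/\langle X\rangle \) (the quadrants cut out by the images of \( \xi_\pm \)) that are strictly mapped into themselves; this yields a dominated splitting and hence projective Anosov, which is essentially Mitsumatsu's original observation and does not yet use the Anosov--Liouville condition.

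The real issue is your last paragraph: upgrading domination to absolute hyperbolicity. You assert that the discriminant inequality \( f_0^2 < 4f_+f_- \) ``controls the transverse Jacobian'' and that combining this with domination ``pins each rate down separately'', but you do not say how. This step is exactly the content of \cite[Theorem~1]{massoni2023} (and of Hozoori's work \cite{hozoori2022}), and it requires a concrete mechanism---for instance, exhibiting adapted norms on \( E^s, E^u \) for which the expansion and contraction rates are pointwise positive, or analyzing the Liouville dynamics on \( \mathbb{R}\times M \) and reading off the transverse rates from the behavior of the Liouville vector field. As written, the promotion step is an assertion rather than an argument, so the converse remains a plausible plan rather than a proof.
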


Moreover, the space of Anosov-Liouville pairs associated to an Anosov flow is not too bad. Indeed, we have:

\begin{theorem}[{\cite[Theorem 2]{massoni2023}}]\label{thm:contractible}
    Let \( \varphi_t \) be an Anosov flow on \( M \) as above. Then:
    \begin{enumerate}
        \item the space of standard Anosov-Liouville pairs supporting \( \varphi_t \) is convex, hence contractible.
        \item The space of Anosov-Liouville pairs supporting \( \varphi_t \) is contractible.
    \end{enumerate}
\end{theorem}
This result can be used to show that the Liouville structure on \( \mathbb{R} \times M \) given by the Liouville form \( \lambda = e^s \alpha_+ + e^{-s} \alpha_- \) only depends on the homotopy class of the Anosov flow up to exact symplectomorphism. It is conjectured that it only depends on the orbit equivalence class of the flow.

\begin{example}\label{ex:examples} These examples historically appeared in the context of searching for non-Weinstein Liouville domains. We refer the reader to \cite{massoni2023} for the details.
    \begin{enumerate}
        \item This construction is due to McDuff \cite{mcduff1991}. Let \( \Sigma \) be a closed hyperbolic surface and let \( \sigma \in \Omega^2(\Sigma) \) be the area form associated to a hyperbolic metric on \( \Sigma \) with total area \( -\chi(\Sigma) \). Consider the twisted symplectic form
            \begin{equation*}
                \omega_{\sigma} = \omega_{\mathrm{can}} + \pi ^* \sigma,
            \end{equation*}
            on \( T^* \Sigma \), where \( \omega_{\mathrm{can}} = d \lambda_{\mathrm{can}} \) is the canonical symplectic form on the cotangent bundle and \( \pi \colon T^* \Sigma \to \Sigma \) is the projection. Let us denote the unit cotangent bundle of \( \Sigma \) by \( S^* \Sigma \). Since \( \pi^* \sigma \) is exact when restricted to \( S^* \Sigma \), we can write \( \pi^* \sigma|_{S^* \Sigma} = d \lambda_{{\mathrm{pre}}} \) for some \( \lambda_{\mathrm{pre}} \in \Omega^1(S^* \Sigma) \). As the name suggests, \( \lambda_{\mathrm{pre}} \) is a prequantization form for the \( S^1 \)-bundle \( S^* \Sigma \to \Sigma \). Then, it is not too hard to show that \( (\lambda_{\mathrm{can}} |_{S* \Sigma}, \lambda_{\mathrm{pre}}) \) is an Anosov-Liouville pair. The underlying Anosov flow is smoothly conjugate to the geodesic flow on \( S \Sigma \).
        \item The following class of examples is due to Mitsumatsu \cite{mitsumatsu1995} and, independently, Geiges \cite{geiges1995}. On \( \mathbb{R}^3 \) consider the \( 1 \)-forms
            \begin{equation*}
                \alpha_{\pm} = \pm e^{z} dx + e^{-z} dy.
            \end{equation*}
            It is straightforward to check that \( (\alpha_+, \alpha_-) \) is an Anosov-Liouville pair. Let now \( A \in \operatorname{SL} (2, \mathbb{Z}) \) be a hyperbolic element and conjugate it to a diagonal matrix via an element \( P \in \operatorname{SL}(2, \mathbb{R}) \):
            \begin{equation*}
                PAP^{-1} =
                \begin{pmatrix}
                    e^\nu & 0 \\
                    0     & e^{-\nu}
                \end{pmatrix}
                = D_{\nu},
            \end{equation*}
            where \( \nu > 0 \).
            Consider the map
            \begin{equation*}
                \begin{aligned}
                    \psi \colon \mathbb{R}^2 &/ P(\mathbb{Z}^2) \times \mathbb{R} \to \mathbb{R}^2 / P(\mathbb{Z}^2) \times \mathbb{R} \\
                    & (x, z) \mapsto (D_{\nu} x, z - \nu).
                \end{aligned}
            \end{equation*}
            Since the forms \( \alpha_\pm \) are invariant under \( \psi \), they induce an Anosov-Liouville pair on the mapping torus \( \big( \mathbb{R}^2 / P(\mathbb{Z}^2) \times \mathbb{R} \big) / \psi \). The underlying vector field is conjugate via \( P \times \operatorname{id} \) to the suspension of \( A \) seen as a hyperbolic automorphism of the torus \( \mathbb{T}^2 = \mathbb{R}^2 / \mathbb{Z}^2 \). \label{itm:ii}
    \end{enumerate}
\end{example}

\subsection{Foliations on \texorpdfstring{\( \mathbb{T}^2 \)}{the torus}}
Here we prove a couple of useful facts about (nonsingular) foliations on \( \mathbb{T}^2 = \mathbb{R}^2 / \mathbb{Z}^2 \). We will only be interested in oriented foliations of class at least \( C^1 \).

\begin{example}
    Let \( X = a \partial_x + b \partial_y \) be a constant vector field on \( \mathbb{T}^2 \). Then, \( X \) induces a foliation of \( \mathbb{T}^2 \) by either embedded copies of \( S^1 \) or immersed copies of \( \mathbb{R} \), depending on whether the slope of \( X \) is rational or not.

    More generally, let \( \varphi \colon S^1 \to S^1 \) be an orientation-preserving diffeomorphism of the circle and consider its \textit{suspension}, namely the manifold \( M_{\varphi} = S^1 \times [0, 1] / {\sim} \), where we identify \( (x, 1) \sim \big( \varphi(x), 0 \big) \). Since \( \varphi \) is isotopic to the identity, \( M_{\varphi} \) is diffeomorphic to \( \mathbb{T}^2 \). Moreover, if \( t \) denotes the coordinate on \( [0, 1] \), the flow of the vertical vector field \( \partial_t \) factors through to the quotient and gives a well-defined flow (hence a foliation) on \( M_{\varphi} \). We denote this foliation by \( \operatorname{Susp}(\varphi) \).
\end{example}

There are also other types of foliations. To introduce them, let us give the following definition.
\begin{definition}
    We say that a foliation \( \mathcal{F} \) admits a \textit{Reeb component} (or \textit{Reeb annulus}) if there is a compact annulus bounded by two oppositely oriented leaves of \( \mathcal{F} \) whose interior does not contain any compact leaves.

    \begin{figure}[h]
        \centering
        \includegraphics[scale=1]{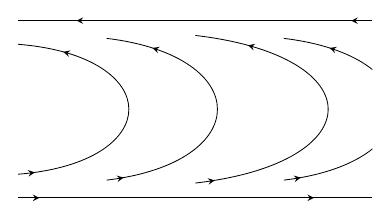}
        \caption{A Reeb annulus.}
        \label{fig:reeb}
    \end{figure}
\end{definition}

The presence of Reeb annuli is the only phenomenon that prevents a foliation from being a suspension, as the following theorem shows. We refer the reader to \cite{hector1981} for a proof.
\begin{theorem}
    Let \( \mathcal{F} \) be a foliation on \( \mathbb{T}^2 \) of class \( C^k \) for \( k \geq 1 \). Then, either
    \begin{enumerate}
        \item \( \mathcal{F} \) admits a Reeb component, or
        \item \( \mathcal{F} \) is the suspension of a diffeomorphism \( \varphi \) of \( S^1 \), i.e.\ there exists a \( C^k \) diffeomorphism \( f \colon M_{\varphi} \to \mathbb{T}^2 \) such that \( f\big( \operatorname{Susp}(\varphi) \big) = \mathcal{F} \).
    \end{enumerate}
\end{theorem}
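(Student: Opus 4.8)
The plan is to pass, via the orientation of $\mathcal{F}$ and an auxiliary metric, to a nowhere-vanishing $C^k$ vector field $X$ whose orbits are the leaves of $\mathcal{F}$, and then to distinguish two cases according to whether $X$ has a periodic orbit. Throughout I would use two preliminary facts. First, a standard index argument shows that no compact leaf can bound an embedded disk: a compact surface carrying a nonsingular $1$-dimensional foliation tangent to its boundary has zero Euler characteristic, whereas $\chi(D^2)=1$. Hence every compact leaf is an essential circle, any two compact leaves are parallel, and the union $C$ of all compact leaves is a closed, $X$-invariant subset (closedness follows by inspecting the first-return map to a small transverse arc near an accumulation point of compact leaves). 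Second, I would use freely the Poincar\'e--Bendixson theory for flows on surfaces: each $\omega$-limit set of $X$ is a nonempty compact invariant set, and in the absence of singular points it is a periodic orbit, an exceptional (Cantor-type) minimal set, or a finite union of such with connecting orbits --- in particular the flow has recurrent orbits.

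First I would handle the case in which $X$ has no periodic orbit, so $C=\emptyset$. By the standard construction, a recurrent orbit returns arbitrarily close to a short transverse arc and can be closed up to an embedded closed transversal $c$, which is essential by the index fact. The crux of this case is the claim that $c$ is \emph{total}, i.e.\ meets every leaf. Granting it, I cut $\mathbb{T}^2$ along $c$ to obtain a compact annulus $N$ on which $\mathcal{F}$ is transverse to both boundary circles; no orbit can remain in the interior of $N$ for all forward time, since its $\omega$-limit set would be a compact invariant set containing neither a singular point nor a periodic orbit while still having to meet $c$; hence every orbit crosses $N$ from one boundary circle to the other, the first-return map $\varphi\colon c\to c$ is a $C^k$ diffeomorphism, and stacking flow boxes along $\varphi$ produces a $C^k$ identification $M_\varphi\cong\mathbb{T}^2$ carrying $\operatorname{Susp}(\varphi)$ to $\mathcal{F}$.

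Next I would handle the case in which $X$ has a periodic orbit, so $C\neq\emptyset$. If $C=\mathbb{T}^2$ then the leaf space is a circle, $\mathcal{F}$ is a circle bundle over $S^1$, and we are done. Otherwise $\mathbb{T}^2\setminus C$ is a disjoint union of open annuli $U_i$, each with closure a compact annulus $\overline{U_i}$ bounded by two compact leaves $\gamma_0,\gamma_1$ and containing no compact leaf in its interior. Applying Poincar\'e--Bendixson inside $\overline{U_i}$, together with a holonomy argument along a transverse arc that excludes an interior orbit whose $\alpha$- and $\omega$-limit sets coincide, shows that every interior orbit runs from $\gamma_0$ to $\gamma_1$. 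This produces a dichotomy: if $\gamma_0$ and $\gamma_1$ are oppositely oriented as circles in $\overline{U_i}$, then $\overline{U_i}$ is by definition a Reeb component and we are done; if instead for every $i$ they are coherently oriented, I would check that each $\overline{U_i}$ is a suspension band --- a transverse arc from $\gamma_0$ to $\gamma_1$ together with the flow gives a product structure --- and that the holonomies contributed along each compact leaf by the two adjacent bands are mutually inverse, so that the bands and the compact leaves assemble into a global identification of $\mathcal{F}$ with some $\operatorname{Susp}(\varphi)$.

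The hard part will be the totality of $c$ in the first case. I would argue by contradiction: if some orbit avoided $c$, the complement of the saturation of $c$ would be a nonempty closed invariant set and would contain a minimal set $K$ disjoint from $c$; being neither a point, nor a periodic orbit, nor all of $\mathbb{T}^2$, it would be an exceptional minimal set, which on a torus has a well-defined asymptotic homology direction $\rho$. Since $c$ is everywhere transverse to $\mathcal{F}$, its homology class pairs nontrivially with $\rho$, and lifting to the universal cover $\mathbb{R}^2$ this forces $c$ to meet $K$ --- a contradiction. Making this interaction between exceptional minimal sets and closed transversals precise, and carrying out the gluing bookkeeping in the second case, are where the real work lies; the remaining ingredients (the index argument, closing up an orbit to a transversal, and the product structure from flow boxes) are routine.
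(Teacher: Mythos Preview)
The paper does not prove this statement itself; it simply refers the reader to Hector--Hirsch \cite{hector1981}. Your sketch is essentially the classical argument found there (split on the existence of compact leaves, build a closed transversal, realise the foliation as a first-return construction), so in that sense you are following exactly the route the paper has in mind.

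Two points in your outline deserve more care. First, in the second case you apply Poincar\'e--Bendixson inside $\overline{U_i}$ to conclude that every interior orbit runs from $\gamma_0$ to $\gamma_1$, but you only confront exceptional minimal sets in the first case. For $k=1$ a Denjoy-type minimal set can in principle sit in the interior of such an annulus, and then nearby orbits accumulate on it rather than on the boundary circles; your holonomy argument as stated only excludes the case $\alpha(x)=\omega(x)=\gamma_0$, not this one. Second, $C$ need not have finitely many components---it can be a Cantor family of circles---so ``the bands and the compact leaves assemble into a global $\operatorname{Susp}(\varphi)$'' hides a real issue: you must produce a single embedded closed transversal meeting every leaf, and piecing together infinitely many local transverse arcs plus arcs across $C$ requires a uniformity argument. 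The standard references sidestep both difficulties by proving directly that the absence of a Reeb component yields a global closed transversal (coherent orientation of all compact leaves forces the foliation to be transverse to some closed curve in a complementary homology class), after which the first-return map furnishes $\varphi$ in one stroke. Your asymptotic-homology-direction idea for totality in the first case is in the right spirit but would also need to be made precise along similar lines.
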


For our purposes, it will be convenient to view foliations in a different guise, namely if \( \mathcal{F} \) is a foliation on the torus, we view it as a map \( g \colon \mathbb{T}^2 \to \mathbb{R}^2 \setminus \{ 0 \} \simeq S^1 \) by picking a nonvanishing vector field directing it. Then, the path component of \( \mathcal{F} \) in the space of foliations is determined by \( g_* \colon \pi_1(\mathbb{T}^2) \to \pi_1(S^1) \). In other words, two foliations can be homotoped one to the other through foliations if and only if they induce the same map on \( \pi_1 \). This follows easily from the canonical correspondence
\begin{equation*}
    \begin{aligned}
        [\mathbb{T}^2&, S^1] \to H^1(\mathbb{T}^2; \mathbb{Z}) \\
        &[g] \mapsto g^* \theta,
    \end{aligned}
\end{equation*}
where \( \theta \in H^1(S^1) \) is defined by \( \langle \theta, [S^1] \rangle = 1 \).

\begin{remark}\label{rmk:trivialization}
    Here, we are implicitly using the canonical trivialization of \( T(\mathbb{T}^2) = \mathbb{T}^2 \times \mathbb{R}^2 \). If we want to talk about foliations on an abstract torus \( \Sigma \) in the same fashion, we have to deal with an additional headache, namely the fact that there is no canonical trivialization of \( T \Sigma \). If \( F \colon \Sigma \to \mathbb{T}^2 \) is an orientation-preserving diffeomorphism and \( \mathcal{F} \) is an oriented foliation on \( \Sigma \), we obtain a map \( h^F \colon \mathbb{T}^2 \to \mathbb{R}^2 \setminus \{ 0 \} \) by pushing forward under \( F \) any vector field directing \( \mathcal{F} \). The map \( h^F_* \colon \pi_1(\mathbb{T}^2) \to \pi_1(S^1) \) is not an invariant of \( \mathcal{F} \). However, we can still say something. Let \( G \colon \Sigma \to \mathbb{T}^2 \) be another orientation-preserving diffeomorphism.
    \begin{claim}
        \( h^F_* = 0 \iff h^G_* = 0 \).
    \end{claim}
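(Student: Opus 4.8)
The plan is to reduce the claim to the vanishing of a single cohomology class which transforms controllably when we change the identification \( \Sigma \cong \mathbb{T}^2 \). Fix a nowhere-vanishing vector field \( v \) on \( \Sigma \) directing \( \mathcal{F} \), and set \( \Phi = G \circ F^{-1} \in \operatorname{Diff}^+(\mathbb{T}^2) \). Using the canonical trivialization of \( T\mathbb{T}^2 \) one has \( h^F = \tilde h^F \circ F^{-1} \) and \( h^G = \tilde h^G \circ G^{-1} \), where \( \tilde h^F, \tilde h^G \colon \Sigma \to \mathbb{R}^2 \setminus \{ 0 \} \) are \( \tilde h^F(p) = dF_p(v(p)) \) and \( \tilde h^G(p) = dG_p(v(p)) \). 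Since \( F^* \) and \( G^* \) are isomorphisms on \( H^1 \), and since by the correspondence \( [\mathbb{T}^2, S^1] \cong H^1(\mathbb{T}^2; \mathbb{Z}) \) recalled above we have \( h^F_* = 0 \iff (h^F)^* \theta = 0 \) (and similarly for \( G \)), it suffices to prove \( (\tilde h^F)^* \theta = 0 \iff (\tilde h^G)^* \theta = 0 \); note \( (\tilde h^F)^*\theta = F^*\big((h^F)^*\theta\big) \) and likewise for \( G \).

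Next I would relate \( \tilde h^F \) and \( \tilde h^G \) through the Jacobian. Let \( D\Phi \colon \mathbb{T}^2 \to \operatorname{GL}^+(2, \mathbb{R}) \) be the Jacobian of \( \Phi \) in the standard coordinates. The chain rule gives \( \tilde h^G(p) = D\Phi(F(p)) \cdot \tilde h^F(p) \), so \( \tilde h^G = a \circ (D\Phi \circ F, \tilde h^F) \), where \( a \colon \operatorname{GL}^+(2, \mathbb{R}) \times (\mathbb{R}^2 \setminus \{ 0 \}) \to \mathbb{R}^2 \setminus \{ 0 \} \) is matrix-vector multiplication. On \( \pi_1 \cong \mathbb{Z} \oplus \mathbb{Z} \), a full rotation of the matrix and a full loop of the vector each wind \( a(M, v) \) once around the origin, so \( a_*(m, n) = m + n \); equivalently \( a^* \theta = \operatorname{pr}_1^* \theta_{\mathrm{GL}} + \operatorname{pr}_2^* \theta \), where \( \theta_{\mathrm{GL}} \) generates \( H^1(\operatorname{GL}^+(2, \mathbb{R}); \mathbb{Z}) \cong \mathbb{Z} \) (recall \( \operatorname{GL}^+(2, \mathbb{R}) \simeq \operatorname{SO}(2) = S^1 \)). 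Pulling back along \( (D\Phi \circ F, \tilde h^F) \) yields \( (\tilde h^G)^* \theta = F^*\big( (D\Phi)^* \theta_{\mathrm{GL}} \big) + (\tilde h^F)^* \theta \).

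The remaining input, and the step I regard as the crux since it is the only one that is not purely formal, is that the Jacobian map \( D\Phi \colon \mathbb{T}^2 \to \operatorname{GL}^+(2, \mathbb{R}) \) is null-homotopic, so that \( (D\Phi)^* \theta_{\mathrm{GL}} = 0 \). Here a genuine topological fact enters: \( \Phi \) is isotopic, through diffeomorphisms of \( \mathbb{T}^2 \), to the linear automorphism \( L_A \), where \( A \in \operatorname{SL}(2, \mathbb{Z}) \) is the matrix of \( \Phi_* \) on \( H_1(\mathbb{T}^2; \mathbb{Z}) \cong \mathbb{Z}^2 \) --- the classical description of the mapping class group of the torus. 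Differentiating such an isotopy produces a homotopy from \( D\Phi \) to the constant map \( DL_A \equiv A \), whence \( D\Phi \) is null-homotopic. (Equivalently, one checks directly that the push-forward under a diffeomorphism of a constant nowhere-vanishing vector field on \( \mathbb{T}^2 \) is homotopic through nowhere-vanishing vector fields to a constant one, which again comes down to the same fact.) Feeding this into the displayed identity gives \( (\tilde h^G)^* \theta = (\tilde h^F)^* \theta \); in particular one class vanishes exactly when the other does, which by the reduction in the first paragraph is the claim. (In fact this shows more, namely that \( h^F_* \) and \( h^G_* \) are intertwined by the automorphism \( \Phi^* \) of \( H^1(\mathbb{T}^2;\mathbb{Z}) \).)
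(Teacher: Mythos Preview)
Your proof is correct. Both your argument and the paper's hinge on the same topological fact --- that any orientation-preserving diffeomorphism of \( \mathbb{T}^2 \) is isotopic to a linear one --- but package it differently. The paper sets up a diagram relating \( h^F \) and \( h^G \) through \( dF \), \( dG \) and the linear map \( A \) with \( FG^{-1} \simeq A \), observes that while the diagram does not commute on the nose, it does commute on \( \pi_1 \), and concludes by chasing isomorphisms. You instead work cohomologically: you factor \( \tilde h^G \) through the action map \( a \colon \operatorname{GL}^+(2,\mathbb{R}) \times (\mathbb{R}^2 \setminus \{0\}) \to \mathbb{R}^2 \setminus \{0\} \), compute \( a^*\theta \) explicitly, and obtain the exact identity \( (\tilde h^G)^*\theta = F^*\big((D\Phi)^*\theta_{\mathrm{GL}}\big) + (\tilde h^F)^*\theta \), with the first term vanishing because \( D\Phi \) is null-homotopic. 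This is a slightly sharper statement than the paper's (you get equality of the classes, and hence the intertwining by \( \Phi^* \) that you note at the end, rather than merely ``one vanishes iff the other does''), at the cost of having to analyze \( a_* \) on \( \pi_1 \). The two arguments are morally the same, with yours being more quantitative and the paper's more diagrammatic.
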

    \begin{proof}
        Let \( Y \) be a vector field directing \( \mathcal{F} \) on \( \Sigma \). It is well known from classical low-dimensional topology that any diffeomorphism of \( \mathbb{T}^2 \) is isotopic to a linear one. Thus, there exists \( A \in \operatorname{SL}_2 \mathbb{Z} \) such that \( FG^{-1} \) is isotopic to \( A \), the latter viewed as a diffeomorphism of the torus.
 Consider now the diagram
    \begin{equation*}
        \begin{tikzcd}
            \mathbb{T}^2 \arrow[d, "G_*Y", swap] \arrow[dd, bend right=80, looseness=2, "h^G", swap] & \Sigma \arrow[l, "G", swap] \arrow[d, "Y"] \arrow[r, "F"] & \mathbb{T}^2 \arrow[d, "F_*Y"] \arrow[dd, bend left=80, looseness=2, "h^F"] \\
            \mathbb{T}^2 \times (\mathbb{R}^2 \setminus \{ 0 \}) \arrow[d, "\mathrm{pr}_2", swap] & T \Sigma_0 \arrow[l, "dG", swap] \arrow[r, "dF"] & \mathbb{T}^2 \times (\mathbb{R}^2 \setminus \{ 0 \}) \arrow[d, "\mathrm{pr}_2"] \\
            \mathbb{R}^2 \setminus \{ 0 \} \arrow[rr, "A"] & & \mathbb{R}^2 \setminus \{ 0 \}
        \end{tikzcd}
    \end{equation*}
    where \( T \Sigma_0 \) denotes \( T \Sigma \) without the zero section. Note that the top right and top left squares commute, whereas in general the bottom one does not. However, since \( FG^{-1} \) is isotopic to \( A \), the diagram
    \begin{equation*}
        \begin{tikzcd}
            \pi_1(\mathbb{T}^2) \arrow[d, "\pi_1(G_*Y)", swap] \arrow[dd, bend right=75, looseness=2, "\pi_1(h^G)", swap]
        & \pi_1(\Sigma) \arrow[l, "\pi_1(G)", swap] \arrow[d, "\pi_1(Y)"] \arrow[r, "\pi_1(F)"]
            & \pi_1(\mathbb{T}^2) \arrow[d, "\pi_1(F_*Y)"] \arrow[dd, bend left=75, looseness=2, "\pi_1(h^F)"] \\
    \pi_1 \big( \mathbb{T}^2 \times (\mathbb{R}^2 \setminus \{ 0 \}) \big) \arrow[d, "\pi_1(\mathrm{pr}_2)", swap]
        & \pi_1(T \Sigma_0) \arrow[l, "\pi_1(dG)", swap] \arrow[r, "\pi_1(dF)"]
        & \pi_1 \big( \mathbb{T}^2 \times (\mathbb{R}^2 \setminus \{ 0 \}) \big) \arrow[d, "\pi_1(\mathrm{pr}_2)"] \\
    \pi_1(\mathbb{R}^2 \setminus \{ 0 \}) \arrow[rr, "\pi_1(A)"]
        &
        & \pi_1(\mathbb{R}^2 \setminus \{ 0 \})
\end{tikzcd}
    \end{equation*}
    does commute, which implies that the leftmost curved arrow is the trivial map if and only if the rightmost curved one is, as the top and bottom arrows are all isomorphisms.
    \end{proof}
\end{remark}

We will be particularly interested in foliations defined by a closed form. A necessary condition for this is that all leaves be pairwise diffeomorphic. Indeed, suppose \( \mathcal{F} = \mathop{\mathrm{ker}} \alpha \) with \( d \alpha = 0 \) and pick a vector field \( Y \) such that \( \alpha(Y) \equiv 1 \). Then \( \mathcal{L}_Y \alpha = 0 \), which implies that the flow of \( Y \) preserves \( \mathcal{F} \). This shows that the diffeomorphism type of the leaves of \( \mathcal{F} \) is locally constant and we can conclude using the fact that \( \mathbb{T}^2 \) is connected.
A consequence of this is that \( \mathcal{F} \) does not admit any Reeb annuli (see \cref{fig:reeb}), which means that, up to a diffeomorphism of the torus, it is the suspension of some orientation-preserving diffeomorphism \( \varphi \) of the circle.

We can actually say more.
\begin{lemma}\label{lemma:fols_by_closed}
    Let \( \alpha \in \Omega^1(\mathbb{T}^2) \) be a nowhere vanishing closed \( 1 \)-form and let \( \mathcal{F} = \mathop{\mathrm{ker}} \alpha \) be the induced foliation. Then, \( \mathcal{F} \) is homotopic to a constant foliation. Equivalently, if \( X \colon \mathbb{T}^2 \to \mathbb{R}^2 \setminus \{ 0 \} \) is a vector field directing the foliation, the map
    \begin{equation*}
        X_* \colon \pi_1(\mathbb{T}^2) \to \pi_1(S^1)
    \end{equation*}
    is trivial.
\end{lemma}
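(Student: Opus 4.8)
The plan is to reduce the statement to the model case of a suspension foliation, where it is transparent, and then to transport the conclusion back to the given foliation by means of \cref{rmk:trivialization}. As observed just above, a foliation defined by a nowhere-vanishing closed form has all leaves diffeomorphic and hence no Reeb annuli; by the structure theorem for foliations on $\mathbb{T}^2$ recalled above, there are therefore an orientation-preserving diffeomorphism $\varphi$ of $S^1$ and a diffeomorphism $f \colon M_\varphi \to \mathbb{T}^2$ with $f\big(\operatorname{Susp}(\varphi)\big) = \mathcal{F}$, where $\mathcal{F} = \ker\alpha$. (One also checks that $[\alpha] \neq 0$ in $H^1(\mathbb{T}^2;\mathbb{R})$, since otherwise $\alpha = dh$ would vanish at an extremum of $h$, but this will not be needed.)

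The difficulty is that $f$ is not explicit, so $h^f$ cannot be computed directly. Instead I would build an explicit, well-behaved diffeomorphism $M_\varphi \to \mathbb{T}^2$. Since $\varphi$ is orientation-preserving it is isotopic to the identity, so choose an isotopy $(\psi_s)_{s \in [0,1]}$ of orientation-preserving diffeomorphisms of $S^1$ with $\psi_0 = \operatorname{id}$ and $\psi_1 = \varphi$, and set
\begin{equation*}
    F \colon M_\varphi \to \mathbb{T}^2 = S^1 \times (\mathbb{R}/\mathbb{Z}), \qquad F\big([x,t]\big) = \big(\psi_t(x), t\big).
\end{equation*}
A quick check shows that $F$ is well defined and a diffeomorphism, and in the canonical trivialization $T\mathbb{T}^2 = \mathbb{T}^2 \times \mathbb{R}^2$ one computes $dF(\partial_t) = \big(\dot\psi_t(x), 1\big)$. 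Hence the vector field $F_*\partial_t$, which directs $F\big(\operatorname{Susp}(\varphi)\big)$, has second component identically equal to $1$, so its direction map $\mathbb{T}^2 \to \mathbb{R}^2 \setminus \{0\}$ takes values in the contractible half-plane $\{(u,v) : v > 0\}$ and is therefore nullhomotopic; in particular it is trivial on $\pi_1$.

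Finally, applying \cref{rmk:trivialization} with $\Sigma = M_\varphi$ and the foliation $\operatorname{Susp}(\varphi)$, the diffeomorphisms $f$ and $F$ produce direction maps $h^f$ (a direction map of $f(\operatorname{Susp}(\varphi)) = \mathcal{F}$) and $h^F$ (a direction map of $F(\operatorname{Susp}(\varphi))$), and the remark gives $h^f_* = 0 \iff h^F_* = 0$. By the previous paragraph $h^F_* = 0$, hence $h^f_* = 0$; since any two vector fields directing the oriented foliation $\mathcal{F}$ are positive multiples of one another and thus define the same direction map, this is exactly the assertion of the lemma. I do not expect a genuine obstacle here: the mathematical content lies in the structure theorem and in \cref{rmk:trivialization}, and the rest is the bookkeeping above. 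The only slightly delicate point is the compatibility of orientations required to apply \cref{rmk:trivialization}, which one can either arrange by hand — composing $F$ with $(u,v) \mapsto (-u,v)$ changes neither the sign of the second component of $F_*\partial_t$ nor the nullhomotopy conclusion — or bypass by noting that the proof of \cref{rmk:trivialization} uses only that $\pi_1$ of a linear automorphism of $\mathbb{R}^2 \setminus \{0\}$ is an isomorphism, which holds regardless of orientation.
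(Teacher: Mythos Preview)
Your proof is correct and follows essentially the same route as the paper: reduce to a suspension via the structure theorem, build an explicit diffeomorphism $M_\varphi\to\mathbb{T}^2$ out of an isotopy from $\operatorname{id}$ to $\varphi$, verify triviality there, and then invoke \cref{rmk:trivialization} to transport the conclusion along the unknown diffeomorphism $f$. The only (minor) difference is which foliation you inspect under the explicit identification: the paper looks at the transverse fiber foliation $\mathcal{G}=\{q^{-1}(x)\}$ and observes it is literally constant, then uses that transverse foliations share the same $\pi_1$-map; you instead push forward $\partial_t$ directly and note that $F_*\partial_t=(\dot\psi_t(x),1)$ lands in a half-plane. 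Your variant is slightly more direct since it avoids the extra appeal to transversality, while the paper's has the small aesthetic advantage of producing an actually constant foliation rather than a nullhomotopic direction map; mathematically they are the same argument.
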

\begin{remark}
    When we say that a foliation is homotopic to another one, we mean homotopic \textit{through foliations}, that is no singularities are allowed.
\end{remark}
\begin{proof}
    As we remarked above, there exist an orientation-preserving diffeomorphism \( \varphi \colon S^1 \to S^1 \) and a diffeomorphism \( f \colon M_{\varphi} \to \mathbb{T}^2 \) such that \( f\big( \operatorname{Susp}(\varphi) \big) = \mathcal{F} \).
    Note that the mapping torus \( M_\varphi \) of \( \varphi \) comes with a well-defined projection \( q \colon M_{\varphi} \to S^1 \) induced by
    \begin{equation*}
        \begin{aligned}
            \operatorname{pr}_2 \colon S^1& \times [0, 1] \to S^1 = \mathbb{R} / \mathbb{Z} \\
            &(x, s) \mapsto s + \mathbb{Z},
        \end{aligned}
    \end{equation*}
    which makes it into a fiber bundle over \( S^1 \). Thus, \( M_{\varphi} \) carries the foliation \( \mathcal{G} \) by the fibers \( \{ q^{-1}(x) \}_{x \in S^1} \). Note that \( \mathcal{G} \) is transverse to \( \operatorname{Susp}(\varphi) \), so the claim follows if we can show that \( \mathcal{G} \) is homotopically trivial, meaning it induces the trivial map \( \pi_1(\mathbb{T}^2) \to \pi_1(S^1) \). By \cref{rmk:trivialization}, the vanishing of this map does not depend on our choice of identification \( M_{\varphi} \cong \mathbb{T}^2 \). Let \( \{ \varphi_s \}_{s \in [0, 1]} \) be an isotopy of diffeomorphisms of \( S^1 \) connecting \( \operatorname{id}_{S^1} \) and \( \varphi \) and consider the map
    \begin{equation*}
        \begin{aligned}
            G \colon S^1 &\times [0, 1] \to S^1 \times [0, 1] \\
            &(x, s) \mapsto \big( \varphi_{1-s}(x), s \big).
        \end{aligned}
    \end{equation*}
    It is straightforward to see that it induces an orientation-preserving diffeomorphism \( \bar{G} \colon \mathbb{T}^2 = S^1 \times S^1 \to M_{\varphi} \). Thus, we have to show that the foliation of \( \mathbb{T}^2 \) by the level sets of the map \( q \bar{G} \colon \mathbb{T}^2 \to S^1 \) is homotopically trivial. This is immediate, since \( q \bar{G} \) is simply the projection onto the second factor, giving the constant foliation of \( \mathbb{T}^2 \) by \( S^1 \times \{ p \} \) with \( p \in S^1 \).
\end{proof}

We end the section by briefly discussing pairs of transverse foliations without parallel compact leaves (see \cref{def:parallel_compact}).

It is easy to see that if a foliation \( \mathcal{F} \) admits a Reeb component, then any foliation \( \mathcal{G} \) transverse to \( \mathcal{F} \) admits a closed leaf in the interior of the Reeb component. In particular, \( \mathcal{F} \) and \( \mathcal{G} \) have parallel compact leaves. The converse is clearly false.

Pairs of transverse foliations on the torus without parallel compact leaves admit a nice normal form.
\begin{theorem}[{\cite[Thm.\ 5.1]{bonatti2016}}]\label{thm:affine_separating}
    Let \( \mathcal{F}, \mathcal{G} \) be two transverse foliations on \( \mathbb{T}^2 \) without parallel compact leaves. Then, there exist affine (i.e.\ constant) foliations \( \mathcal{H}, \mathcal{I} \) and a diffeomorphism \( \theta \colon \mathbb{T}^2 \to \mathbb{T}^2 \) such that the foliations \( \theta(\mathcal{F}), \theta(\mathcal{G}), \mathcal{H} \) and \( \mathcal{I} \) are pairwise transverse.
\end{theorem}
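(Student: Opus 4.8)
The plan is to prove the statement by distilling from the pair $(\mathcal{F},\mathcal{G})$ a single foliation $\mathcal{K}$ that is transverse to both and is diffeomorphic to an affine foliation; the diffeomorphism straightening $\mathcal{K}$ will be the desired $\theta$, and the two affine foliations $\mathcal{H},\mathcal{I}$ will then be almost forced.

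\emph{Reduction to suspensions.} First I would observe that neither $\mathcal{F}$ nor $\mathcal{G}$ has a Reeb component: if $\mathcal{F}$ had one, with compact boundary leaves $\ell_1,\ell_2$, then — as recalled in \cref{sec:preliminaries} — the transverse foliation $\mathcal{G}$ would carry a compact leaf in its interior, freely homotopic to $\ell_1,\ell_2$, and since $\ell_1,\ell_2$ are compact leaves of $\mathcal{F}$ this would produce parallel compact leaves (\cref{def:parallel_compact}), against the hypothesis. By symmetry, both $\mathcal{F}$ and $\mathcal{G}$ are then, by the dichotomy recalled in \cref{sec:preliminaries}, suspensions of orientation-preserving circle diffeomorphisms. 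Consequently each of them is transverse to some nowhere-vanishing closed $1$-form (pull back the angular form of the suspension fibration), and each has a nonzero asymptotic cycle $\sigma_\mathcal{F},\sigma_\mathcal{G}\in H_1(\mathbb{T}^2;\mathbb{R})$, well defined up to positive scaling, carrying all of its foliation cycles.

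\emph{A common transverse closed form — the main obstacle.} The key claim is the existence of a nowhere-vanishing closed $1$-form $\omega$ with $\omega|_{T\mathcal{F}}$ and $\omega|_{T\mathcal{G}}$ nowhere zero. Orienting $\mathcal{F},\mathcal{G}$ by directing fields $X_\mathcal{F},X_\mathcal{G}$, this amounts to finding a closed $1$-form positive on one of the cone fields $\langle X_\mathcal{F},X_\mathcal{G}\rangle$ or $\langle X_\mathcal{F},-X_\mathcal{G}\rangle$. By Sullivan's duality between closed transverse forms and structure cycles, a closed form positive on $\langle X_\mathcal{F},X_\mathcal{G}\rangle$ exists unless some nontrivial nonnegative combination $a\sigma_\mathcal{F}+b\sigma_\mathcal{G}$ is null-homologous — i.e.\ unless $\sigma_\mathcal{F}$ and $\sigma_\mathcal{G}$ are oppositely directed — and likewise for $\langle X_\mathcal{F},-X_\mathcal{G}\rangle$ with "same direction" replacing "opposite". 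A nonzero vector cannot be both a positive and a negative multiple of $\sigma_\mathcal{G}$, so at least one alternative holds and $\omega$ exists; by a small $C^0$ perturbation (which preserves the open conditions $\omega|_{T\mathcal{F}},\omega|_{T\mathcal{G}}\neq 0$) we may further take $\omega$ of class $C^1$ and $[\omega]$ primitive in $H^1(\mathbb{T}^2;\mathbb{Z})$. I expect the genuine work to lie here; this sign bookkeeping is exactly where the hypothesis is used, since otherwise $\mathcal{F}$ or $\mathcal{G}$ could carry a Reeb component whose oppositely oriented boundary leaves obstruct any transverse closed form.

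\emph{Straightening and finishing.} Set $\mathcal{K}=\ker\omega$. Choosing $Y$ with $\omega(Y)\equiv 1$ gives $\mathcal{L}_Y\omega=0$, so the flow of $Y$ preserves $\mathcal{K}$; thus $\mathcal{K}$ has no Reeb component and $\omega$ is a transverse invariant measure for $\mathcal{K}$, equivalent to Lebesgue on any transversal circle. Hence the holonomy return map of $\mathcal{K}$ preserves a Lebesgue-equivalent measure and is conjugate to a rotation, and — with $[\omega]$ primitive integral — $\mathcal{K}$ is a circle bundle over $S^1$, necessarily trivial since its total space is $\mathbb{T}^2$; so there is a diffeomorphism $\theta\colon\mathbb{T}^2\to\mathbb{T}^2$ carrying $\mathcal{K}$ to the affine foliation by vertical circles. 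Now $\theta(\mathcal{F})$ and $\theta(\mathcal{G})$, being transverse to $\mathcal{K}$, are transverse to the vertical foliation, so the union of the images of their tangent-direction maps $\mathbb{T}^2\to\mathbb{RP}^1$ is a closed set avoiding the vertical direction, hence missing an open arc $U$ around it. Choosing distinct $h\neq i$ in $U$ and letting $\mathcal{H},\mathcal{I}$ be the affine foliations in those directions, all six pairs among $\theta(\mathcal{F}),\theta(\mathcal{G}),\mathcal{H},\mathcal{I}$ are transverse: $\theta(\mathcal{F})\pitchfork\theta(\mathcal{G})$ because $\mathcal{F}\pitchfork\mathcal{G}$; $\mathcal{H}\pitchfork\mathcal{I}$ because $h\neq i$; and $\mathcal{H},\mathcal{I}$ are transverse to $\theta(\mathcal{F})$ and $\theta(\mathcal{G})$ because $h,i$ avoid every direction realized by the latter two. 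This yields the desired configuration.
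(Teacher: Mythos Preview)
The paper does not prove this theorem at all: it is quoted from \cite[Thm.~5.1]{bonatti2016} and used as a black box in the proof of \cref{thm:existence}. So there is no in-paper argument to compare against, and your sketch has to be judged on its own.

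As a route to the \emph{literal} statement (pairwise transversality of $\theta(\mathcal{F}),\theta(\mathcal{G}),\mathcal{H},\mathcal{I}$) your plan is sound. Two points deserve attention. First, in the Sullivan step you assert that the only obstruction to a closed form positive on the cone $\langle X_\mathcal{F},X_\mathcal{G}\rangle$ is a null-homologous combination $a\sigma_\mathcal{F}+b\sigma_\mathcal{G}$ with $a,b\ge 0$. This uses implicitly that \emph{every} foliation cycle of a suspension lies on a single ray in $H_1$; that is true (all invariant measures of a circle homeomorphism share the same rotation number), but you should say so, and you should also explain why the homology class of an arbitrary closed structure current for the cone field lands in $\mathbb{R}_{\ge 0}\sigma_\mathcal{F}+\mathbb{R}_{\ge 0}\sigma_\mathcal{G}$ rather than something larger. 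Second, and more substantively: you place both $\mathcal{H}$ and $\mathcal{I}$ in a small arc around the vertical direction. That yields pairwise transversality, hence the theorem \emph{as stated}, but it does \emph{not} give the ``separating'' configuration depicted in \cref{fig:affine_separating} and actually used in the proof of \cref{thm:existence}, where $\mathcal{H}$ and $\mathcal{I}$ lie in \emph{different} arcs of $\mathbb{RP}^1\setminus\{T_p\theta(\mathcal{F}),T_p\theta(\mathcal{G})\}$ at every point (equivalently, some closed $1$-form has kernel inside the cone between $\mathcal{F}$ and $\mathcal{G}$, which is what produces the expression $f\alpha_u-g\alpha_s$ with $f,g>0$). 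Your $\ker\omega$ sits in one of those two arcs, so you have one of the two separating foliations for free; but finding a constant direction that lies in the \emph{other} arc at every point is not automatic from transversality to the vertical, and your argument as written does not supply it. If your goal is only the stated theorem this is fine; if you intend it to feed into \cref{thm:existence}, you will need to strengthen the conclusion.
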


The theorem says that two transverse foliations without parallel compact leaves can be separated (up to changing them by a diffeomorphism of the torus) by constant foliations (see \cref{fig:affine_separating}).

\begin{figure}[h]
    \centering
    \includegraphics[scale=1]{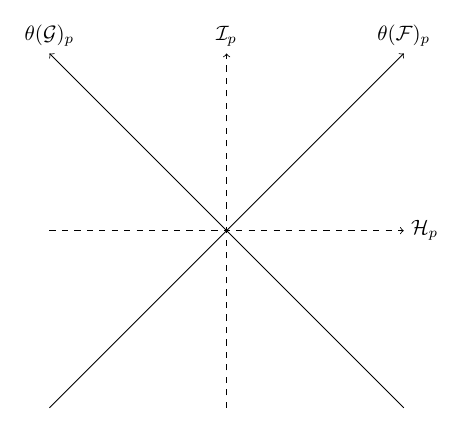}
    \caption{The constant foliations \( \mathcal{H} \) and \( \mathcal{I} \) divide the tangent space \( T_p \mathbb{T}^2 \) into four quadrants. \( \theta(\mathcal{F}) \) and \( \theta(\mathcal{G}) \) lie in different quadrants.}
    \label{fig:affine_separating}
\end{figure}

\section{Proof of \texorpdfstring{\cref{thm:obstruction}}{Theorem 1.2}}
\label{sec:main_proof}

Throughout this section, \( \Sigma \subseteq M \) will be an embedded torus transverse to the flow \( \varphi_t \colon M \to M \). Recall that, according to \cref{def:pre-lagrangian}, \( \Sigma \) is pre-Lagrangian if and only if it can embedded in \( \mathbb{R} \times M \) as a graph of a function which is Lagrangian with respect to the symplectic structure induced by some Anosov-Liouville pair supporting \( \varphi_t \).

\begin{remark}
    The function \( f \) in \cref{def:pre-lagrangian} can be assumed without loss of generality to be constantly equal to \( 0 \). Indeed, suppose \( \Sigma \) is pre-Lagrangian for some choice of \( f \in C^{\infty}(\Sigma) \) and \( (\alpha_+, \alpha_-) \). This means that the \( 1 \)-form \( e^f \alpha_+ |_{\Sigma} + e^{-f} \alpha_-|_{\Sigma} \) is closed. Extend now \( f \) to a function (which we still call \( f \)) on \( M \) and note that \( (e^f \alpha_+, e^{-f} \alpha_-) \) is still Anosov-Liouville for \( \varphi_t \). Thus, \( \{ 0 \} \times \Sigma \subseteq \mathbb{R} \times M \) is Lagrangian with respect to the symplectic structure induced by \( (e^f \alpha_+, e^{-f} \alpha_-) \).
\end{remark}

\begin{example}\label{ex:suspension}
    Consider the suspension of a linear hyperbolic automorphism of the torus. It is straightforward to see that every \( \mathbb{T}^2 \)-fiber is pre-Lagrangian (with the Anosov-Liouville pair being the one described in \cref{ex:examples} \ref{itm:ii}). Actually more is true, namely the fact that the Reeb vector fields of the contact forms described in the example are tangent to every fiber.
\end{example}

In view of the above remark, we will look for an Anosov-Liouville pair \( (\alpha_+, \alpha_-) \) such that \( \alpha_+|_{\Sigma} + \alpha_-|_{\Sigma} \) is closed. Note that \( \alpha_+|_{\Sigma} + \alpha_-|_{\Sigma} \) is a nonvanishing \( 1 \)-form and hence it defines a foliation on \( \Sigma \).

Let now \( \alpha_u \) and \( \alpha_s \) be \( C^1 \) \( 1 \)-forms as in \cref{subsec:al}. In particular, \( \mathop{\mathrm{ker}} \alpha_s = E^{wu} \) and \( \mathop{\mathrm{ker}} \alpha_u = E^{ws} \). Note that the foliations \( \mathcal{F}^{ws}_{\Sigma} \) and \( \mathcal{F}^{wu}_{\Sigma} \) on \( \Sigma \) are defined by \( \alpha_u|_{\Sigma} \) and \( \alpha_s|_{\Sigma} \) respectively. We are now ready to prove \cref{thm:obstruction}.

\begin{proof}[Proof of \cref{thm:obstruction}]
    By assumption, there exists an Anosov-Liouville pair \( (\alpha_+, \alpha_-) \) supporting \( \varphi_t \) such that \( d (\alpha_+ + \alpha_-)|_{\Sigma} = 0 \). Identifying \( \Sigma \) with \( \mathbb{T}^2 \), \cref{lemma:fols_by_closed} shows that the induced map \( \pi_1(\mathbb{T}^2) \to \pi_1(S^1) \) is trivial. Moreover, by \cref{rmk:trivialization}, this does not depend on the choice of identification \( \Sigma \cong \mathbb{T}^2 \).

    Now, by \cref{thm:contractible} the space of Anosov-Liouville pairs supporting \( \varphi_t \) is contractible. In particular, we can homotope the pair \( (\alpha_+, \alpha_-) \) to the standard pair \( (\alpha_u - \alpha_s, \alpha_u + \alpha_s) \) through Anosov-Liouville pairs. Thus, we obtain a path of foliations going from \( \ker (\alpha_+ + \alpha_-)|_{\Sigma} \) to \( \ker 2 \alpha_u|_{\Sigma} = \ker \alpha_u |_{\Sigma} \). This shows that \( \mathcal{F}^{ws}_{\Sigma} \) is trivial on \( \pi_1 \). The statement for \( \mathcal{F}^{wu}_{\Sigma} \) follows immediately, since \( \mathcal{F}^{ws}_{\Sigma} \pitchfork \mathcal{F}^{wu}_{\Sigma} \).
\end{proof}

The obstruction to being pre-Lagrangian provided by \cref{thm:obstruction} is not optimal. Indeed, we have already seen a necessary condition that is stronger, namely the fact that the leaves of the foliation defined by \( (\alpha_+ + \alpha_{-})|_{\Sigma} \) must be pairwise diffeomorphic. Let us consider a torus foliated as in the picture below and
\begin{figure}[ht]
    \centering
    \includegraphics[scale=1]{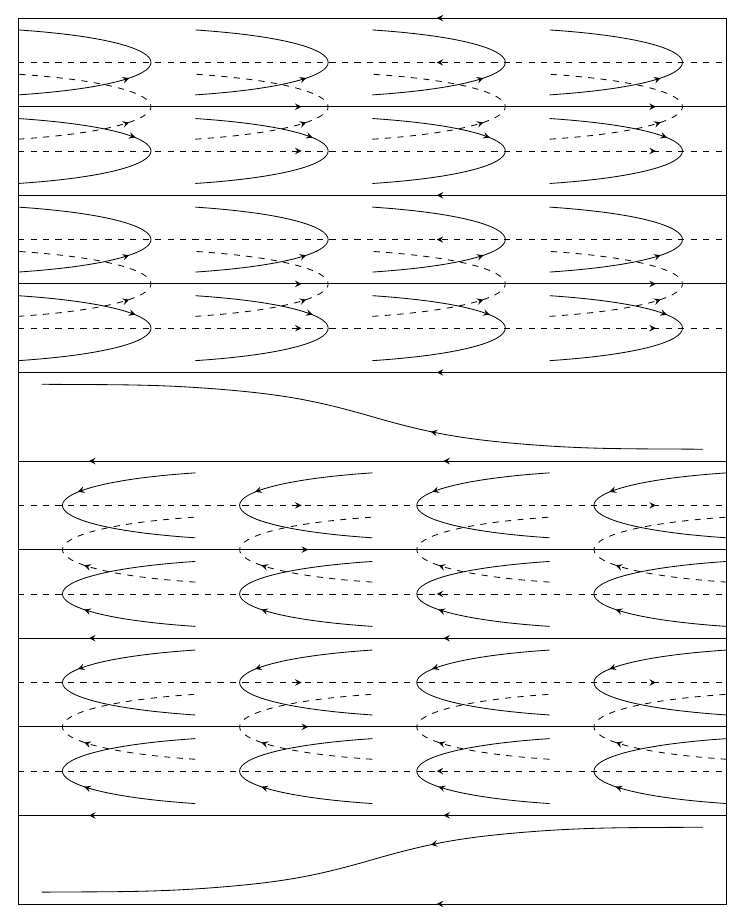}
    \caption{The black foliation is trivial on \( \pi_1 \). The dashed foliation is transverse to it (we have not drawn it in its entirety in order not to clutter the picture).}
    \label{fig:counterexample}
\end{figure}
suppose the black foliation is \( \mathcal{F}^{ws}_{\Sigma} \). By inspection, one sees that it is trivial on \( \pi_1 \) (the winding of the top Reeb annuli is \enquote{undone} by the bottom ones). Now, any foliation (the dashed one) that is transverse to it is going to have a number of Reeb annuli as well. This follows from observing that the dashed foliation admits closed leaves in the interior of every Reeb annulus of \( \mathcal{F}^{ws}_{\Sigma} \) by the Poincaré-Bendixson theorem and by inspecting their orientations. Let now \( (\alpha_+, \alpha_-) \) be an Anosov-Liouville pair and suppose that the dashed foliation in \cref{fig:counterexample} is induced by, say, \( \alpha_+|_{\Sigma} \) (note that the foliation \( \ker \alpha_+|_{\Sigma} \) is always transverse to \( \mathcal{F}^{ws}_{\Sigma} \), as the proof of \cite[Prop.\ 2.2.6]{eliashberg1998} shows). We can then consider the foliation defined by \( (\alpha_+ + \alpha_-)|_{\Sigma} \) and note that it is transverse to the dashed one. Thus, by the same reasoning, it is going to admit Reeb annuli. This shows that this torus cannot be pre-Lagrangian, despite \( \mathcal{F}^{ws}_{\Sigma} \) being trivial on \( \pi_1 \). The reason why we drew eight Reeb annuli in the black foliation is because we need a sufficient number of them to get this \enquote{cascading} effect.

Now, does there exist an Anosov flow that admits a transverse torus foliated as in \cref{fig:counterexample}? This question is answered in the affirmative by work of Beguin, Bonatti and Yu \cite{beguin2017}, which provides a very general procedure to construct Anosov flows starting from basic building blocks which the authors call \textit{hyperbolic plugs}. We highlight here the main points and refer the readert to \cite{beguin2017} for a thorough treatment.
A \textit{hyperbolic plug} \( (U, X) \) is a compact three-manifold \( U \) with boundary together with a vector field \( X \) such that:
\begin{enumerate}
    \item X is transverse to \( \partial U \) and
    \item the maximal \( X \)-invariant set \( \Lambda = \bigcap_{t \in \mathbb{R}} \varphi_t(U) \) (here \( \varphi_t \) is the flow of \( X \)) is hyperbolic with one dimensional stable and unstable bundles.
\end{enumerate}
Note that \( \partial U \) is necessarily a union of tori and that, in general, \( \partial U = \partial U_+ \sqcup \partial U_- \), where \( X \) is outward-pointing (resp.\ inward-pointing) on \( \partial U_+ \) (resp.\ \( \partial U_- \)).
A hyperbolic plug is an \textit{attractor} (resp.\ \textit{repeller}) if \( \partial U_+ = \emptyset \) (resp.\ \( \partial U_- = \emptyset \)).
In \cite{beguin2017} the authors show that, under some conditions, one can glue hyperbolic plugs together to construct Anosov flows. The Franks-Williams flow \cite{franks1980}, for instance, is obtained by suitably gluing together an attractor and a repeller with boundary foliations induced by the stable and unstable bundles as in \cref{fig:franks-williams}.
We are going to use two facts proven in \cite{beguin2017}:
\begin{itemize}
    \item \cite[Theorem 1.10]{beguin2017} says that for any Morse-Smale foliation \( \mathcal{F} \) on \( \mathbb{T}^2 \), there exists an attractor \( (U, X) \) and a homeomorphism \( h \colon \partial U \to \mathbb{T}^2 \) such that \( h_*(\mathcal{F}_X^s) = \mathcal{F} \), where \( \mathcal{F}_X^s \) denotes the foliation induced on \( \partial U \) by the stable bundle of \( X \). In other words, we can realize any Morse-Smale foliation on the \( 2 \)-torus as the entrance foliation of a hyperbolic attracting plug;
    \item a special case of \cite[Theorem 1.12]{beguin2017} says that (up to topological equivalence) we can embed any hyperbolic attracting plug \( (U, X) \) in an Anosov flow on a closed three-manifold. More precisely, there exists a closed three-manifold \( M \) carrying an Anosov vector field \( Y \) and an embedding \( \theta \colon U \to M \) such that \( \theta_* X = Y \).
\end{itemize}
These two facts together immediately imply that we can find an Anosov flow admitting a transverse torus foliated by, say, the stable bundle of the flow exactly as in \cref{fig:counterexample}.

\section{Examples of pre-Lagrangians}
\label{sec:examples}
In the previous sections, we have shown that \enquote{most} tori transverse to an Anosov flow are not pre-Lagrangian by providing a topological obstruction. So far, the only pre-Lagrangian tori that we have encountered are the \( \mathbb{T}^2 \)-fibers of suspension flows (cf.\ \cref{ex:suspension}). The goal of this section is to show that, although pathological cases such as the one in \cref{fig:counterexample} rule out a converse to \cref{thm:obstruction}, the induced foliations \( \mathcal{F}_{\Sigma}^u, \mathcal{F}_{\Sigma}^s \) on a transverse torus \( \Sigma \) can sometimes indicate pre-Lagrangianness.

\medskip

Consider an Anosov flow \( \varphi_t \colon M \to M \) generated by a vector field \( X \) and admitting a transverse torus \( \Sigma \subseteq M \).

\begin{lemma}\label{lemma:c1open}
    The pre-Lagrangian condition is \( C^1 \)-open.
\end{lemma}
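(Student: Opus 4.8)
The plan is to show that if $\Sigma_0 \subseteq M$ is a pre-Lagrangian torus transverse to $\varphi_t$, then every embedded torus $\Sigma$ that is $C^1$-close to $\Sigma_0$ is again pre-Lagrangian (for the *same* flow $\varphi_t$). By the remark following \cref{def:pre-lagrangian} we may assume the witnessing data for $\Sigma_0$ consists of an Anosov-Liouville pair $(\alpha_+,\alpha_-)$ supporting $\varphi_t$ with $(\alpha_+ + \alpha_-)|_{\Sigma_0}$ closed; equivalently, the foliation $\ker(\alpha_+ + \alpha_-)|_{\Sigma_0}$ is defined by a closed $1$-form, which by \cref{lemma:fols_by_closed} is the same as saying it is homotopically trivial. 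First I would fix $(\alpha_+,\alpha_-)$ once and for all and show that the property ``$\ker(\alpha_+ + \alpha_-)|_{\Sigma}$ is homotopically trivial'' is preserved when $\Sigma$ moves in a $C^1$-small way: the nonvanishing $1$-form $(\alpha_+ + \alpha_-)|_{\Sigma}$ depends continuously (in $C^0$, which suffices for homotopy classes) on the $C^1$-embedding of $\Sigma$, so the induced map $\pi_1(\Sigma)\to\pi_1(S^1)$ is locally constant. Hence for $\Sigma$ close to $\Sigma_0$ the foliation $\ker(\alpha_+ + \alpha_-)|_{\Sigma}$ is still homotopically trivial.

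The remaining point is that homotopic triviality of $\ker(\alpha_+ + \alpha_-)|_{\Sigma}$ is not quite the same as $(\alpha_+ + \alpha_-)|_{\Sigma}$ being closed, but it implies we can *correct* the pair to make it closed. Concretely, write $\beta = (\alpha_+ + \alpha_-)|_{\Sigma} \in \Omega^1(\Sigma)$, a nowhere-vanishing $1$-form whose kernel is homotopically trivial. I claim there is a function $f \in C^{\infty}(\Sigma)$ such that $e^{f}\,\alpha_+|_\Sigma + e^{-f}\,\alpha_-|_\Sigma$ is closed — indeed this is exactly the computation already used implicitly in \cref{sec:examples} and it is the crux of the proof of \cref{thm:existence}; since the hypothesis there (homotopic triviality, obtained from the absence of parallel compact leaves) is in force here, the same argument produces $f$. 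Extending $f$ to $M$, the pair $(e^{f}\alpha_+, e^{-f}\alpha_-)$ is again Anosov-Liouville supporting $\varphi_t$ (by the group action in the remarks after \cref{def:liouville}), and it witnesses that $\Sigma$ is pre-Lagrangian. Thus the set of pre-Lagrangian tori is $C^1$-open.

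I expect the main obstacle to be making precise the passage from ``$\ker(\alpha_+ + \alpha_-)|_{\Sigma}$ homotopically trivial'' to ``solvable for $f$'': one needs that whenever $a\,\alpha_+|_\Sigma + b^{-1}\alpha_-|_\Sigma$-type rescalings can kill the cohomology class, and this is where the foliation-geometry of \cref{thm:existence}'s proof (the normal form from \cref{thm:affine_separating}, Reeb-component analysis) enters. If one prefers to avoid invoking \cref{thm:existence}'s machinery, an alternative is to argue directly: the closedness condition $d\big(e^{f}\alpha_+|_\Sigma + e^{-f}\alpha_-|_\Sigma\big)=0$ is, after dividing by the area form of $\Sigma$, a first-order PDE for $f$ whose solvability is governed by the cohomology class $[(\alpha_+ + \alpha_-)|_\Sigma]$ being zero on every compact leaf, and homotopic triviality of the kernel foliation is exactly what guarantees this. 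Either way, the topological input is the locally-constant-ness of $\pi_1(\Sigma)\to\pi_1(S^1)$ under $C^1$-perturbation, which is elementary, so the lemma should follow cleanly from results already established above.
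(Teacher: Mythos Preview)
Your argument has a genuine gap at the step ``homotopic triviality of $\ker(\alpha_+ + \alpha_-)|_\Sigma$ $\Rightarrow$ there exists $f$ with $e^f\alpha_+|_\Sigma + e^{-f}\alpha_-|_\Sigma$ closed''. This implication is false in general, and the paper's own counterexample in \cref{sec:main_proof} (the torus of \cref{fig:counterexample}) is built precisely to witness its failure: there $\mathcal{F}^{ws}_\Sigma$ is homotopically trivial, yet $\Sigma$ is not pre-Lagrangian for \emph{any} Anosov-Liouville pair, hence no such $f$ can exist. Your appeal to \cref{thm:existence} does not help: its hypothesis is the absence of parallel compact leaves, which is strictly stronger than homotopic triviality, and its proof does not manufacture an $f$ of the type you describe; it rescales the \emph{defining} pair $(\alpha_u,\alpha_s)$ via \cref{lemma:scaling} and then explicitly invokes the proof of \cref{lemma:c1open} itself, so citing it here would also be circular. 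The ``alternative'' PDE argument you sketch has the same flaw: solvability of $d(e^f\alpha_+ + e^{-f}\alpha_-)=0$ is simply not governed by the homotopy class of the kernel foliation.

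The point is that by passing to the homotopy class you discard exactly the information you need. The paper keeps the $C^1$-closeness and uses it directly: if $\Sigma'$ is $C^1$-close to $\Sigma$, then $(\alpha_+ + \alpha_-)|_{\Sigma'}$ is $C^1$-close to a closed form $\beta$; setting $\sigma = \tfrac12\big(\beta - (\alpha_+ + \alpha_-)|_{\Sigma'}\big)$, extended $C^1$-small to $M$ with $\sigma(X)=0$, the pair $(\alpha_+ + \sigma,\ \alpha_- + \sigma)$ is still Anosov-Liouville by the open inequalities of \cref{lemma:al_characterization} and now restricts to $\beta$ on $\Sigma'$. Note that this is an \emph{additive} correction to both forms, not the multiplicative $e^{\pm f}$-action you tried to use; the latter only moves within a single $C^\infty(M)$-orbit and cannot absorb an arbitrary small $1$-form error.
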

\begin{proof}
    Suppose \( \Sigma \) is pre-Lagrangian and let \( (\alpha_+, \alpha_-) \) be an Anosov-Liouville pair such that \( (\alpha_+ + \alpha_-)|_{\Sigma} \) is closed.
    \begin{claim}
        For any Anosov-Liouville pair \( (\beta_+, \beta_-) \) such that \( (\beta_+ + \beta_-)|_{\Sigma} \) is \( C^1 \)-close to a \( 1 \)-form \( \beta \in \Omega^1(\Sigma) \), there exists an Anosov-Liouville pair \( (\tilde{\beta}_+, \tilde{\beta}_-) \) such that \( (\tilde{\beta}_+ + \tilde{\beta}_-)|_{\Sigma} = \beta \).
    \end{claim}
    \begin{proof}
        Let \( \sigma = \frac{1}{2} (\beta - \beta_+|_{\Sigma} - \beta_-|_{\Sigma}) \) and extend it in a \( C^1 \)-small fashion to \( M \) in such a way that \( \sigma(X) = 0 \). It is then easy to verify, using \cref{lemma:al_characterization}, that \( (\tilde{\beta}_+, \tilde{\beta}_-) = (\beta_+ + \sigma, \beta_- + \sigma) \) is an Anosov-Liouville pair (supporting \( \varphi_t \)) provided \( \sigma \) is \( C^1 \)-small enough. Moreover, by construction, \( (\tilde{\beta}_+ + \tilde{\beta}_-)|_{\Sigma} = \beta \).
    \end{proof}
    The lemma now follows directly from the claim, since for any transverse torus \( \Sigma' \) \( C^1 \)-close enough to \( \Sigma \), the form \( (\alpha_+ + \alpha_-)|_{\Sigma'} \) is \( C^1 \)-close to a closed one.

\end{proof}

In order to prove \cref{thm:existence}, we will need the following lemma.

\begin{lemma}\label{lemma:scaling}
    Let \( (\alpha_u, \alpha_s) \) be a defining pair for \( \varphi_t \) and \( f, g \colon \Sigma \to \mathbb{R}_{> 0} \) be positive functions. Then, there exists a defining pair \( (\tilde{\alpha}_u, \tilde{\alpha}_s) \) such that
    \begin{equation*}
        \begin{aligned}
            \tilde{\alpha}_u|_{\Sigma} &= f \alpha_u|_{\Sigma}, \\
            \tilde{\alpha}_s|_{\Sigma} &= g \alpha_s|_{\Sigma}. \\
        \end{aligned}
    \end{equation*}
\end{lemma}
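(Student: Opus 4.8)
The plan is to obtain \( (\tilde\alpha_u,\tilde\alpha_s) \) from \( (\alpha_u,\alpha_s) \) by a pointwise rescaling. Since \( \ker\alpha_u = E^{ws} \) and \( \ker\alpha_s = E^{wu} \) are determined by the flow, and the kernels of a defining pair are forced to be these, any candidate must have the form \( \tilde\alpha_u = h_u\alpha_u \), \( \tilde\alpha_s = h_s\alpha_s \) with \( h_u,h_s \) nowhere vanishing — positive, after possibly flipping a sign — and the prescribed restrictions force \( h_u|_\Sigma = f \), \( h_s|_\Sigma = g \). Because \( X \in E^{ws} = \ker\alpha_u \) we have \( \iota_X\alpha_u = 0 \), so Cartan's formula gives
\[
\mathcal{L}_X(h_u\alpha_u) = (X\log h_u + r_u)\,(h_u\alpha_u),
\]
and likewise \( \mathcal{L}_X(h_s\alpha_s) = (X\log h_s + r_s)\,(h_s\alpha_s) \). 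Hence, directly from the definition of a defining pair (the kernels being already correct), \( (\tilde\alpha_u,\tilde\alpha_s) \) will be a defining pair as soon as \( X\log h_u + r_u > 0 \) and \( X\log h_s + r_s < 0 \) hold everywhere on \( M \); alternatively one can feed these inequalities into \cref{lemma:al_characterization} to check that the associated standard pair \( (\tilde\alpha_u-\tilde\alpha_s,\tilde\alpha_u+\tilde\alpha_s) \) is Anosov-Liouville.

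The problem thus reduces to an extension problem: writing \( \phi = \log h_u \), I must find \( \phi \in C^1(M) \) with \( \phi|_\Sigma = \log f \) and \( X\phi > -r_u \) on \( M \) (and, symmetrically, \( \psi = \log h_s \) with \( \psi|_\Sigma = \log g \) and \( X\psi < -r_s \)). Since \( M \) is closed there is \( c > 0 \) with \( r_u \ge c \) and \( -r_s \ge c \), so it is enough to produce an extension of the boundary datum that is almost invariant along the flow, i.e.\ with \( |X\phi| < c \). For this I would use transversality: \( \Sigma \) admits an embedded flow-box collar \( \Psi\colon \Sigma\times(-\varepsilon,\varepsilon)\hookrightarrow M \) with \( \Psi(\cdot,0) = \mathrm{incl}_\Sigma \) and \( \Psi_*\partial_\tau = X \); on the collar one sets \( \phi\circ\Psi(x,\tau) = \chi(\tau)\log f(x) \) for a smooth cutoff \( \chi \) equal to \( 1 \) near \( 0 \) and supported in \( (-\varepsilon,\varepsilon) \), and \( \phi = 0 \) off the collar, so that \( X\phi = \chi'(\tau)\log f(x) \) there and \( 0 \) elsewhere. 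The construction of \( \psi \) is the mirror image.

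The main obstacle is keeping \( |X\phi| \) below \( c \): for a fixed collar width \( \varepsilon \) the cutoff \( \chi \) must fall from \( 1 \) to \( 0 \) across an interval of length \( \varepsilon \), so \( \|\chi'\|_\infty \) cannot be made small, and the naive interpolation only works when \( \log f \) has small oscillation. The real content of the lemma is therefore to carry out the interpolation along a long stretch of the flow rather than across a thin collar — e.g.\ by using the longest embedded flow box along \( \Sigma \), so that \( \|\chi'\|\sim 1/T \) becomes small, and on the part of \( M \) not covered by it extending the boundary datum by a function that is locally constant along the orbits, exploiting that forward and backward orbits issuing from \( \Sigma \) run for an arbitrarily long time before accumulating on the recurrent set. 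Assembling these pieces into a single \( C^1 \) function on \( M \) with \( |X\phi| < c \), and performing the mirror construction for \( \psi \), is the technical heart of the argument; granting it, the computation of the first paragraph finishes the proof.
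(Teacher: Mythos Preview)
Your reduction is exactly the paper's: write \( \tilde\alpha_u = \lambda\alpha_u \), compute \( \mathcal L_X\tilde\alpha_u = (X\log\lambda + r_u)\tilde\alpha_u \), and seek a positive \( \lambda \) on \( M \) with \( \lambda|_\Sigma = f \) and \( X\log\lambda + r_u > 0 \). The divergence is in the extension step, and there is a genuine gap.

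You impose the unnecessary constraint \( \phi = \log\lambda \equiv 0 \) off a collar. This forces the interpolation to absorb the full oscillation of \( \log f \) over a bounded flow-time, which is exactly the obstacle you flag. Your proposed remedy --- spreading the interpolation over a long stretch of orbit and invoking that orbits from \( \Sigma \) run far before meeting the recurrent set --- does not work in general: there are \emph{transitive} Anosov flows with transverse tori (Bonatti--Langevin, for instance), so orbits through \( \Sigma \) need not be wandering and may return to \( \Sigma \) in bounded time. The sketch in your last paragraph is therefore not a proof.

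The paper's observation dissolves the difficulty: there is no reason for \( \lambda \) to equal \( 1 \) away from \( \Sigma \). In a flow-box collar \( \Sigma\times(-2\delta,2\delta)_z \) with \( X = \partial_z \), first solve \( \partial_z\log\lambda + r_u = 1 \) on \( |z|\le\varepsilon \) with \( \lambda(\cdot,0)=f \); then pick constants \( C > \max_p\lambda(p,\varepsilon) \) and \( 0 < c < \min_p\lambda(p,-\varepsilon) \) and declare \( \lambda\equiv C \) for \( z\ge\delta \), \( \lambda\equiv c \) for \( z\le-\delta \). On the two interpolation strips one joins monotonically (e.g.\ interpolate \( \log\lambda \) linearly in \( z \) and smooth), so \( \partial_z\log\lambda \ge 0 \) there; on the constant regions \( \partial_z\log\lambda = 0 \). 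Hence \( X\log\lambda \ge 0 \) everywhere except possibly on \( |z|\le\varepsilon \), where the ODE gives \( X\log\lambda + r_u = 1 \), and in all cases \( X\log\lambda + r_u > 0 \). The same argument with signs reversed handles \( \alpha_s \). No long-time dynamics are needed; the whole construction lives in a fixed collar and the freedom to choose the constants \( C,c \) is what you were missing.
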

\begin{proof}
    Note that if \( \lambda \colon M \to \mathbb{R}_{>0} \) is a positive function and \( \tilde{\alpha}_u = \lambda \alpha_u \),
    \begin{equation*}
        \mathcal{L}_X \tilde{\alpha}_u = \frac{X \lambda + \lambda r_u}{\lambda} \tilde{\alpha}_u = (X \log{\lambda} + r_u) \tilde{\alpha}_u.
    \end{equation*}
    The strategy will be to construct a positive function \( \lambda \) such that \( X \log{\lambda} + r_u \) is everywhere positive and \( \lambda|_{\Sigma} = f \). Let us identify a neighborhood of \( \Sigma \) with \( \Sigma \times (- 2\delta, 2\delta)_z \), with \( X \) being identified with \( \partial_z \). Let us impose the conditions
    \begin{equation*}
        \begin{cases}
            X \log \lambda + r_u = 1 & \text{near } \Sigma, \\
            \lambda|_\Sigma = f.
        \end{cases}
    \end{equation*}
    This forces
    \begin{equation*}
        \lambda(p, z) = f(p) e^{\int_0^z (1 - r_u(p)) \mathop{}\!\mathrm{d} s}
    \end{equation*}
    near \( \Sigma \). Thus, given \( 0 < \epsilon < \delta \), we define
    \begin{equation*}
        \lambda(p, z) =
        \begin{cases}
            C & \text{if } z \geq \delta, \\
            f(p) e^{\int_0^z (1 - r_u(p)) \mathop{}\!\mathrm{d} s} & \text{if } -\epsilon \leq z \leq \epsilon, \\
            c & \text{if } z \leq -\delta,
        \end{cases}
    \end{equation*}
    for \( 0 < c < C \) satisfying
    \begin{equation*}
        \begin{aligned}
            C & > \max_{p \in \Sigma} \lambda(p, \epsilon), \\
            c &< \min_{p \in \Sigma} \lambda(p, -\epsilon).
        \end{aligned}
    \end{equation*}
    The only thing left to do is to extend \( \lambda \) over \( \Sigma \times (\epsilon, \delta) \) and \( \Sigma \times (-\delta, -\epsilon) \) in such a way as to keep \( X \log{\lambda} + r_u \) positive. For fixed parameter \( p \) it is straightforward, while in order to guarantee differentiability we need a canonical way of interpolating. We could for instance exponentially interpolate, i.e.\ set
    \begin{equation*}
        \lambda(p, z) = \lambda(p, \epsilon)^{\frac{\delta - z}{\delta - \epsilon}} C^{\frac{z - \epsilon}{\delta - \epsilon}}, \quad z \in (\epsilon, \delta)
    \end{equation*}
    (and similarly on \( (-\delta, -\epsilon) \)) and then smooth the result. Since \( p \) varies in a compact space, these smoothings can be performed in such a way as to keep \( X \log{\lambda} + r_u \) positive.
    An analogous argument takes care of \( \alpha_s \). This finishes the proof.
\end{proof}

\begin{proof}[Proof of \cref{thm:existence}]
    Let \( (\alpha_u, \alpha_s) \) be a defining pair for \( \varphi_t \) and assume that the foliations \( \mathcal{F}^{u}_{\Sigma} \) and \( \mathcal{F}^s_{\Sigma} \) do not have parallel compact leaves. By \cref{thm:affine_separating}, they can be separated by two foliations that are defined by \textit{closed} \( 1 \)-forms. Thus, there exist positive functions \( f, g \colon \Sigma \to \mathbb{R}_{> 0} \) such that \( (f \alpha_u - g \alpha_s)|_{\Sigma} \) is closed.
    By \cref{lemma:scaling}, we can assume without loss of generality that \( (\alpha_u - \alpha_{s})|_{\Sigma} \) is closed. Let \( (\alpha_+, \alpha_-) = (\alpha_u - \alpha_s, \alpha_u + \alpha_s) \) be the associated standard Anosov-Liouville pair. Replacing it by \( (C \alpha_+, \frac{1}{C} \alpha_-) \), we can further assume that \( (\alpha_+ + \alpha_-)|_{\Sigma} \) is as \( C^1 \)-close to a closed form (i.e.\ \( C \alpha_+|_{\Sigma} \)) as we wish. Then, the proof of \cref{lemma:c1open} shows that \( \Sigma \) is pre-Lagrangian.
\end{proof}

\cref{thm:existence} for instance applies to the Anosov flow constructed by Bonatti and Langevin in \cite{bonatti1994}, which provided the first example of a transitive Anosov flow not equivalent to a suspension flow and admitting a transverse torus. By construction, the foliations induced on the torus by the weak stable and unstable bundles admit transverse closed leaves. This in particular implies that they do not have parallel compact leaves and hence the torus is pre-Lagrangian.

Other examples can be obtained by making use of the aforementioned machinery developed by Bonatti, Beguin and Yu. Mimicking the construction of the Franks-Williams flow, one could for example glue together a hyperbolic attractor \( (U, X) \) and a hyperbolic repeller \( (V, Y) \), both with torus boundary and with Reebless boundary foliations \( \mathcal{F}_U \) and \( \mathcal{F}_{V} \). Their existence is ensured by \cite[Theorem 1.10]{beguin2017}. If the gluing map \( \psi \colon \partial U \to \partial V \) is chosen in such a way that the foliations \( \psi(\mathcal{F}_{U}) \) and \( \mathcal{F}_V \) are transverse and do not admit parallel compact leaves, then the resulting torus in \( U \cup_{\psi} V \) is pre-Lagrangian.

\bibliographystyle{alpha}
\bibliography{bib}

\begin{thebibliography}{CLMM22}

\bibitem[BBY17]{beguin2017}
François Béguin, Christian Bonatti, and Bin Yu.
\newblock Building anosov flows on 3–manifolds.
\newblock {\em Geometry \& Topology}, 21(3):1837–1930, May 2017.

\bibitem[BL94]{bonatti1994}
Christian Bonatti and Remi Langevin.
\newblock Un exemple de flot d’anosov transitif transverse à un tore et non
  conjugué à une suspension.
\newblock {\em Ergodic Theory and Dynamical Systems}, 14(4):633–643, 1994.

\bibitem[Bru93]{brunella1993}
Marco Brunella.
\newblock {Separating the Basic Sets of a Nontransitive Anosov Flow}.
\newblock {\em Bulletin of the London Mathematical Society}, 25(5):487--490, 09
  1993.

\bibitem[BZ16]{bonatti2016}
Christian Bonatti and Jinhua Zhang.
\newblock Transverse foliations on the torus {$\mathbb{T}^2$} and partially
  hyperbolic diffeomorphisms on 3-manifolds, 2016.

\bibitem[CLMM22]{cieliebak2022}
Kai Cieliebak, Oleg Lazarev, Thomas Massoni, and Agustin Moreno.
\newblock Floer theory of anosov flows in dimension three, 2022.

\bibitem[ET98]{eliashberg1998}
Y.~Eliashberg and W.P. Thurston.
\newblock {\em Confoliations}.
\newblock University lecture series. American Mathematical Society, 1998.

\bibitem[FW80]{franks1980}
John Franks and Bob Williams.
\newblock Anomalous anosov flows.
\newblock In Zbigniew Nitecki and Clark Robinson, editors, {\em Global Theory
  of Dynamical Systems}, page 158–174, Berlin, Heidelberg, 1980. Springer
  Berlin Heidelberg.

\bibitem[Gei95]{geiges1995}
Hansjörg Geiges.
\newblock {Examples of Symplectic 4-Manifolds with Disconnected Boundary of
  Contact Type}.
\newblock {\em Bulletin of the London Mathematical Society}, 27(3):278--280, 05
  1995.

\bibitem[Has94]{hasselblatt1994}
Boris Hasselblatt.
\newblock Regularity of the anosov splitting and of horospheric foliations.
\newblock {\em Ergodic Theory and Dynamical Systems}, 14(4):645–666, 1994.

\bibitem[HH81]{hector1981}
G.~Hector and U.~Hirsch.
\newblock {\em Introduction to the Geometry of Foliations}.
\newblock Number pt. 1-2 in Aspects of mathematics. Vieweg, 1981.

\bibitem[Hoz22]{hozoori2022}
Surena Hozoori.
\newblock Symplectic geometry of anosov flows in dimension 3 and bi-contact
  topology, 2022.

\bibitem[Mas23]{massoni2023}
Thomas Massoni.
\newblock Anosov flows and liouville pairs in dimension three, 2023.

\bibitem[McD91]{mcduff1991}
Dusa McDuff.
\newblock Symplectic manifolds with contact type boundaries.
\newblock {\em Inventiones mathematicae}, 103(1):651--671, 1991.

\bibitem[Mit95]{mitsumatsu1995}
Yoshihiko Mitsumatsu.
\newblock Anosov flows and non-stein symplectic manifolds.
\newblock {\em Annales de l'institut Fourier}, 45(5):1407--1421, 1995.

\bibitem[Sma67]{smale1967}
Stephen Smale.
\newblock {Differentiable dynamical systems}.
\newblock {\em Bulletin of the American Mathematical Society}, 73(6):747 --
  817, 1967.

\end{thebibliography}

\end{document}